\numberwithin{equation}{section}
\newtheorem{theorem}{Theorem}[section]
\newtheorem{corollary}[theorem]{Corollary}
\newtheorem{lemma}[theorem]{Lemma}
\newtheorem{proposition}[theorem]{Proposition}
\def\N{{\mathbb N}}
\def\II{\textbf{II}}
\def\III{\textbf{III}}
\newcommand{\R}{\mathbb{R}}
\newcommand{\Z}{\mathbb{Z}}
\newcommand{\E}{\mathbb{E}}
\newcommand{\Ge}{\mathcal{L}}
\begin{document}

\title[modified log-Sobolev inequality for a PJMP.]{Modified log-Sobolev inequality for a compact  Pure Jump Markov Process with degenerate jumps.}

\author{Ioannis Papageorgiou}{ Ioannis Papageorgiou   \\  IME, Universidade de Sao Paulo }

 \thanks{\textit{Address:} Neuromat, Instituto de Matematica e Estatistica,
 Universidade de Sao Paulo, 
 rua do Matao 1010,
 Cidade Universitaria, 
 Sao Paulo - SP-  Brasil - CEP 05508-090.
\\ \text{\  \   \      } 
\textit{Email:}  ipapageo@ime.usp.br, papyannis@yahoo.com  \\
 This article was produced as part of the activities of FAPESP  Research, Innovation and Dissemination Center for Neuromathematics (grant 2013/ 07699-0 , S.Paulo Research Foundation); This article  is supported by FAPESP grant   (2017/15587-8) }
\keywords{modified log-Sobolev inequality,  brain neuron networks, Pure Jump Markov Processes, concentration, empirical approximations}
\subjclass[2010]{  60K35,  26D10,       60G99} 

%\homepage[]{Your web page}
%\thanks{}

% Collaboration name, if desired (requires use of superscriptaddress option in \documentclass). 
% \noaffiliation is required (may also be used with the \author command).
%\collaboration{}
%\noaffiliation

\begin{abstract}
We study the modified log-Sobolev inequality   for a class of pure jump Markov processes that describe the interactions between brain neurons. In particular, we focus on a finite and compact  process with degenerate jumps inspired by the model introduced  by Galves and L\"ocherbach in \cite{G-L}. As a result, we obtain concentration properties for empirical approximations of the process.    
\end{abstract}
 
\date{}
\maketitle

\section{Introduction}

 We study properties of the model introduced by  Galves and L\"ocherbach  in \cite{G-L},   in order to    describe the interaction activity occurring between brain neurons. We focus in particular on finite networks of compact   neurons taking  values in the domain of the invariant measure. What is in particular interesting about this  jump process  is the  degenerated character of the  jumps,  in the sense that after a particle  spikes,  it then jumps to zero and therefore  loses its memory.    In addition,   the spike  probability of a specific  neuron at any time   depends on  its actual position at that time and so depends on  the past of the whole neural system since the last time   this neuron had a spike.

The   aim of the paper is to show the modified logarithmic Sobolev inequality  for  the model and consequently obtain empirical concentration properties. In \cite{H-P}    Poincar\'e type inequalities were    proven. There were two separate cases that were examined. At first,   the initial configuration was a general one, and then   the initial configuration belonged to the domain of the invariant measure.   In the current paper where we are restricted  exclusively to the case where the initial configuration belongs to the domain of the invariant measure we will obtain the stronger modified log-Sobolev inequality, of the following form
 \begin{align}\label{tLogSob}P_t \left( f\log \frac{f}{P_t f}\right) \leq C(t) P_t\left( \frac{\Gamma(f,f)}{f}\right),\end{align}
for   the associated semigroup   $P_t$. In that way, we show that when the process enters  at the invariant domain,   despite the degenerate nature of its jumps, it behaves similar  to a non degenerate jump process.

As a result of the modified log-Sobolev inequality,  for any neuron $j$ and times   $t_1<...<t_n<T$      for some $T> 0$, we can obtain concentration inequalities for empirical approximations of the process as the ones shown bellow 
  \[ P \left( \left\vert \frac{\sum_{k=1}^{n}f(X^{j}_{t_k})}{n} - \frac{\sum_{k=1}^{n}\E [f(X^{j}_{t_k})]}{n}  \right\vert \geq  \epsilon   \right)\leq  De^{-\epsilon n}  . \]
   These imply that the empirical approximation converges exponential fast to the process as the number of the observables $n$ goes to infinity.
  
 In the next section, we present the model that describes the  neuroscience framework of the problem.

\subsection{The model.}
We want to model the action potential, called spike, of the membrane potential of a neuron. In relation to this spiking activity, there are two important features. The first is the degenerate nature of the spiking, that relates to the fact that  whenever a neuron spikes its membrane potential resets to zero. The second characteristic is that the probability of a neuron $j$ with membrane potential on  an actual position $x_t^j$ to spike at a given time $t$, depends on its position at this time, through its  intensity $\phi(x_t^j),$ where $\phi:\R_+ \to \R_+ $ is a given intensity function.
The interaction between the neurons  occurs by all   the post-synaptic neurons   $i$  receiving an additional amount of membrane potential $W_{j\rightarrow i}$ from the neuron $j$ that spiked. In the current work we consider the case of  pure jump Markov process,   abbreviated as  PJMP, where inactivity occurs, e.g lack of a drift, between two consecutive  spikes.

The spiking activity of an individual neuron, can be modeled by a single point process as   in  \cite{C17}, \cite{D-L-O},  \cite{D-O}, \cite{G-L}, \cite{H-R-R} and \cite{H-L}. In these papers the emphasis is put on describing the spiking time. Here however, we focus on modelling the interactions occurring between the neurons in the network through spikes, as was done in \cite{An}, \cite{H-K-L}, \cite{Davis84}, \cite{Davis93}, \cite{C-D-M-R},  \cite{L17}, \cite{PTW-10} and \cite{ABGKZ}. 

To do this, for a network comprising of $N>1$ neurons, we consider  the Markov  process $X_t=(X_t^1,...,X_t^N),$ representing the membrane potential of each neuron at time $t \in \R_+$.  Then, for every $t\geq 0$ and $i=1,...,N$,   $X_t$ solves the following equation 
\begin{align*}
X^{ i}_t = & X^{i}_0  -  \int_0^t \int_0^\infty 
X^{ i}_{s-}  1_{ \{ z \le  \phi ( X^{ i}_{s-}) \}} N^i (ds, dz) \\
 &+    \sum_{ j \neq i } W_{j \to i} \int_0^t\int_0^\infty  1_{ \{ z \le \phi ( X^{j}_{s-}) \}} 1_{ \{  X^{ i}_{s-} \leq m-W_{j \to i} \}}N^j (ds, dz),
\nonumber
\end{align*}  
where $(N^i(ds, dz))_{i=1,\dots,N}$ is a family of i.i.d.  Poisson random measures on $\R_+ \times \R_+ $ with  intensity measure $ds dz$. 

For any test function $ f : \R_+^N \to \R $  and $x \in [0,m]^N$ the generator of the process is given by \begin{equation}\label{eq:generator0}
\Ge f (x ) = \sum_{ j = 1 }^N \phi (x^j) \left[ f ( \Delta_j ( x)  ) - f(x) \right]
\end{equation}
where
we have denoted\begin{equation}\label{eq:delta1}
(\Delta_j (x))_i =    \left\{
\begin{array}{lcl}
x^{i} +W_{j \to i}  & i \neq j    \text{ and }  x^{i} +W_{j \to i} \leq m   \\  
x^{i}   & i \neq j    \text{ and }  x^{i} +W_{j \to i}  > m    \\
0 & i = j 
\end{array}
\right\}
\end{equation}
for some $m>0$ and weights $W_{j\rightarrow ij}>0$. Furthermore, we also assume that for some strictly positive constant  $\delta$,  the intensity function satisfies the following conditions:
\begin{equation}\label{phi2}
\phi(x)\geq \delta.
\end{equation}

   \subsection{Main results.}
 
 For simplicity, we will widely use the following convention. For the expectation of a function $f$ with respect to a measure  $\nu$ we will write \[\nu (f)=\int fd\nu.\]   
   We consider  a Markov process $(X_t)_{t\geq 0}$ which is described by the  Markov semigroup $P_t f(x)=\E^x(f(X_t))$ and $\Ge$ the associated infinitesimal generator.  
   
   We define $\mu$ to be the invariant measure for the semigroup $(P_t)_{t\geq 0}$ if and only if 
   \[\mu P_t =\mu.\]    
Define $D$    the domain of the invariant measure $\mu$,  that is 
\[D=\left\{x \in (\Z\cap [0,m])^N:  \mu (x)>0 \right\}.\]   Furthermore, we define the so called "carr\'e du champ" operator by:
   \[\Gamma (f,g):=\frac{1}{2}(\Ge (fg)-f\Ge g- g\Ge f).\]
For the  PJMP  process  defined as in (\ref{eq:generator0})-(\ref{eq:delta1}) 
  we then have
 \begin{align*}\Gamma (f,f)= \frac{1}{2}(\sum_{ i = 1 }^N \phi (x^i) \left[ f ( \Delta_i ( x)  )- f(x) \right]^2).
 \end{align*}

We are interested in studying the modified log-Sobolev inequality for the semigroup $P_t$ on a discreet setting (see   \cite{SC}, \cite{D-SC}, \cite{W-Y}, \cite{A-L}  and \cite{Chaf}).   In \cite{H-P} a Poincar\'e type   inequality was  shown  for the semigroup $P_t$ of the  bounded process (\ref{eq:generator0})-(\ref{eq:delta1}) for general initial configurations. In the current paper, we study again bounded neurons (that is $m<\infty$), but this time we focus exclusively  on the   case where the initial configuration belongs  on the domain of the invariant measure. Restricting the domain of the  process,  allows to  strengthen the results for the semigroup.   The  method that we use, is based on the so called semigroup method which is used to prove log-Sobolev and Poincar\'e inequalities for the semigroup $P_t$  (see \cite{A-L} and \cite{W-Y}), usually with a constant that depends on time $t$.   

Then, we study concentration inequalities for empirical approximations of the model. Although, in principle we use technics that relate modified log-Sobolev inequalities to concentration properties (see \cite{Ta1}  and \cite{Ta2}, \cite{Bo-Le}, \cite{Led} and \cite{Led0}), in order to obtain the empirical concentration inequalities, we actually  need to extend the results of the modified log-Sobolev inequality to cylindrical functions. Although, neither the initial modified inequality  obtained, nor the one for the cylindrical functions has the standard form (they both include the carr\'e du champ beyond one jump), we still manage to obtain the desired concentration properties. 

Before we proceed with the presentation of the    results we will clarify    a distinction on the dual nature of the initial configuration from which  the process may  start. This classification is based on the return probability to the initial configuration. We recall that the main mechanism of the dynamics dictates that the  membrane potential  of every neuron  lies  within some positive compact set and that whenever a neuron    spikes, every other neuron  jumps some length  up, while the only  movement downwards that it  can do is to fall to zero when and only  it spikes. Furthermore, in between spikes the neurons stay still. That implies that   there is a finite number of possible configurations to which the membrane potential of the neural system  can return after every neuron has spiked at least one time. This is the domain of the invariant measure $\mu$ of the semigroup $P_t$. As a result, whenever an  initial configuration   does not belong to the domain of the invariant measure, after the process has entered the invariant  domain  it can never return back to the initial configuration. In \cite{H-P} the focus was on general   initial configurations that were allowed not to    belong to the 
 domain of the invariant measure. Examples of such configurations are easy to construct. One can for instance think of configurations where $x^{i}=x$ for all $i=1,...,N$, or $x^{i}$ not being an analogue of $W_{j\rightarrow i}$. In the current work however, we restrict ourselves exclusively to the cases where the initial configuration, and so, every configuration, belongs in the domain of the invariant measure.

The first   result of the paper related to the modified   log-Sobolev inequality follows.
\begin{theorem}\label{thmCon} Assume the PJMP as described  in (\ref{eq:generator0})-(\ref{phi2}).  Then, for any $x\in D$,  we have the following type of modified log-Sobolev inequality\begin{align*}P_t\left( f(x)\log \frac{f(x)}{P_t f(x)}\right) \leq  & \delta(t)P_{t}\left(\frac{\Gamma(f,f)(x)}{f(x)}\right) +  \delta(t)\sum_{j=1}^N  P_{t}\left(\frac{ \Gamma(f,f)(\Delta_j(x))}{ f(x)}\right)+  \\  &+ \delta(t) \sum_{i,j=1}^N    P_{t}\left(\frac{ \Gamma(f,f)(\Delta_i(\Delta_j(x)))}{ f(x)}\right)
\end{align*}
where $\delta(t)$ is an increasing polynomial  of degree three, for which $\delta(0)=0$.\end{theorem}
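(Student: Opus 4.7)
The plan is to invoke the semigroup method for modified log-Sobolev inequalities, as developed in \cite{A-L} and \cite{W-Y}, specialised to the PJMP generator \eqref{eq:generator0}. Fix $x_0\in D$ and $t>0$, set $g_s(x):=P_{t-s}f(x)$, so that $g_0=P_tf$ and $g_t=f$, and start from the interpolation
\[
P_t\!\left(f\log\tfrac{f}{P_tf}\right)(x_0)=\int_0^t\tfrac{d}{ds}P_s(g_s\log g_s)(x_0)\,ds.
\]
Using $\partial_s g_s=-\Ge g_s$ and the explicit jump form of $\Ge$, a direct algebraic simplification (in which $\Ge(g_s\log g_s)$ telescopes against $(1+\log g_s)\Ge g_s$) identifies the integrand with the discrete entropy production
\[
P_s\!\left[\sum_{j=1}^N\phi(x^j)\Bigl(g_s(\Delta_j x)\log\tfrac{g_s(\Delta_j x)}{g_s(x)}-g_s(\Delta_j x)+g_s(x)\Bigr)\right]\!(x_0).
\]

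For a pointwise bound on the bracket I would use the elementary inequality $a\log(a/b)-a+b\le (a-b)^2/b$, valid for all $a,b>0$ as a consequence of $\log r\le r-1$. Applied termwise with $a=g_s(\Delta_j x)$ and $b=g_s(x)$ it dominates the bracket by $2\,\Gamma(g_s,g_s)(x)/g_s(x)$, so that
\[
P_t\!\left(f\log\tfrac{f}{P_tf}\right)(x_0)\le 2\int_0^t P_s\!\left(\tfrac{\Gamma(g_s,g_s)}{g_s}\right)(x_0)\,ds.
\]

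The heart of the argument is then to replace $g_s=P_{t-s}f$ by $f$ at appropriate forward-shifted configurations, in order to recognise the three families of terms in the statement. The natural device is Duhamel's identity $P_{t-s}f=f+\int_0^{t-s}P_v\Ge f\,dv$, applied inside each squared difference $(g_s(\Delta_j y)-g_s(y))^2$ in $\Gamma(g_s,g_s)(y)$, and iterated once more on the resulting $P_v\Ge f(\Delta_j y)-P_v\Ge f(y)$. Cauchy--Schwarz on the squared time integrals converts them into prefactors $(t-s)^k$ with $k=0,1,2$, while two successive applications of the explicit form $\Ge h(z)=\sum_k\phi(z^k)[h(\Delta_k z)-h(z)]$, combined with $(a+b)^2\le2a^2+2b^2$ and a finite discrete Cauchy--Schwarz over the $N$ jump directions, regroup the pointwise differences into $\Gamma(f,f)$ evaluated at the sites $y$, $\Delta_jy$, and $\Delta_i\Delta_jy$. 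Taking the outer expectation $P_s$ and using the Markov property $P_s\circ P_{t-s}=P_t$ places these forward-shifted arguments under $P_t$; the time weights, after integration $\int_0^t(t-s)^k\,ds=t^{k+1}/(k+1)$ for $k=0,1,2$, collapse into a single polynomial $\delta(t)$ of degree three vanishing at $t=0$.

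The decisive technical obstacle is twofold. First, one must handle the denominator $1/g_s=1/P_{t-s}f$, which has to be comparable to $1/f$ evaluated at the corresponding forward point; the hypotheses $x_0\in D$ (so that $D$ is finite), $\phi\ge\delta>0$, and the convex Jensen bound $1/P_uf\le P_u(1/f)$ conspire to give uniform comparability on any bounded time horizon, letting the denominator be absorbed into $P_t(\Gamma(f,f)/f)$ and its $\Delta_j$ and $\Delta_i\Delta_j$ analogues. Second, the degenerate nature of the jumps (neuron $j$ resetting to zero) complicates the identification of the quadratic cross-terms produced by the two Duhamel iterations: one must check that they regroup cleanly into $\Gamma(f,f)$ at no more than two successive forward jumps, without producing leftover contributions that would inflate the polynomial degree past three. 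Controlling this bookkeeping is the main technical crux of the proof.
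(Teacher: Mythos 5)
Your interpolation set-up, the pointwise entropy-production bound $a\log(a/b)-a+b\le (a-b)^2/b$, and the idea of expanding $\Gamma(P_{t-s}f,P_{t-s}f)$ by Dynkin/Duhamel inside the squared differences all match the paper's proof; that is the right skeleton, and the paper indeed iterates Dynkin twice to produce the $\Delta_j$ and $\Delta_i\Delta_j$ terms.

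The genuine gap is your treatment of the denominator $1/P_{t-s}f$. You propose to absorb it using the Jensen bound $1/P_uf\le P_u(1/f)$ together with finiteness of $D$, but Jensen does not give the required structure. After applying Jensen to $P_s\!\left(\Gamma(f,f)(y)/P_{t-s}f(y)\right)$ you are left with $\sum_{y,z}\pi_s(x_0,y)\,\pi_{t-s}(y,z)\,\Gamma(f,f)(y)/f(z)$, in which the numerator is evaluated at $y$ and the denominator at $z$; this is not $P_t(\Gamma(f,f)/f)(x_0)$, and there is no obvious way to rebundle the two factors into a single quotient without a comparison between $\Gamma(f,f)(y)$ and $\Gamma(f,f)(z)$, which is not available for general $f$. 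The paper instead proves a separate lemma (its Lemma 2.6, ``neolemma'') that bounds $P_s\!\left(g/P_{t-s}f\right)\le d(t-s)P_t(g/f)+d(t-s)\sum_jP_t(g(\Delta_j\cdot)/f)$. Its proof applies Dynkin's formula to $g^{1/2}$ (not to $g$), then Cauchy--Schwarz with weight $P_{t-s}$, and crucially the kernel-ratio bounds $\pi_u(x,y)/\pi_{t-s}(x,y)\le C_{1.1}$; this both produces the clean quotient $P_t(g/f)$ and explains where the extra $\Delta_j$-shifted terms come from.

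You also do not mention the two kernel-ratio lemmas, $\pi_u(x,y)/\pi_t(x,y)\le C_{1.1}$ and $\pi_u^2(\Delta^i(x),y)/\pi_t(x,y)\le C_{1.2}$, which are where the hypotheses $x\in D$, finiteness of $D$, $\mu>0$ on $D$, and $\phi\ge\delta$ actually do work (and require a careful case analysis for $t$ smaller or larger than a critical $t_0$, and for $y=\Delta^i(x)$ versus $y\ne\Delta^i(x)$). These are the decisive technical steps, not the ``bookkeeping'' of the Duhamel iterations as you suggest. Without these ratio bounds one cannot control the $\Psi_1,\Psi_2$-type terms uniformly in the inner integration variable $u$, and the $(t-s)^k$ polynomial weights do not materialize. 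So the proposal correctly identifies the strategy at a high level but substitutes a non-working device (Jensen) for the actual key lemma and misses the kernel-ratio estimates entirely.
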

The modified log-Sobolev inequality presented on the  theorem, is not very different from the standard form (\ref{tLogSob}) of the modified log-Sobolev inequality. At first it should be noticed that it implies the same concentration properties, as presented in Proposition \ref{Pcon}. Furthermore, for a special class of   functions it is equivalent to the standard modified log-Sobolev inequality studied for example in  \cite{Chaf} and \cite{A-L}, as shown on the following corollary.

 \begin{corollary} \label{corP}For any $i\in\{1,...,N\}$, define $f_i:[0,m]^N\rightarrow \R$ a function that depends only on  $x^{i}$. Assume the PJMP as described  in (\ref{eq:generator0})-(\ref{phi2}), with equal weights $w_{ij}=w,\ \forall 1\leq i,j\leq N$, for some $w\in (0,+\infty)$.  If for every $i\in\{1,...,N\}$, $f_i$  is either decreasing and convex, or increasing and concave, then for every $x\in D$, 
 \begin{align*}P_t\left( f_i(x)\log \frac{f_i(x)}{P_t f_i(x)}\right) \leq   \zeta(t)P_{t}\left(\frac{\Gamma(f_i,f_i)(x)}{f_i(x)}\right) \end{align*}
  where $\zeta(t)$ is a  polynomial  of degree three. 
 \end{corollary}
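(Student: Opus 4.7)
The plan is to apply Theorem \ref{thmCon} to $f_i$ and then absorb the two families of extra carré du champ terms (at $\Delta_j(x)$ and at $\Delta_i(\Delta_j(x))$) into a constant multiple of the single term $\Gamma(f_i,f_i)(x)$ already present, using the convexity/monotonicity hypothesis.

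Writing $f_i(x)=g(x^i)$ and exploiting the equal weights, $\Delta_k$ acts on coordinate $i$ by zeroing $x^i$ when $k=i$ and by replacing $x^i$ with $(x^i+w)\wedge m$ when $k\neq i$, so
\begin{equation*}
\Gamma(f_i,f_i)(x)=\tfrac12\phi(x^i)\bigl[g(0)-g(x^i)\bigr]^2+\tfrac12\mathbf{1}_{\{x^i+w\leq m\}}\bigl[g(x^i+w)-g(x^i)\bigr]^2\sum_{k\neq i}\phi(x^k),
\end{equation*}
and the expressions for $\Gamma(f_i,f_i)(\Delta_j(x))$ and $\Gamma(f_i,f_i)(\Delta_i(\Delta_j(x)))$ are obtained by substituting $x^i$ with its image under the corresponding composition.

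The crux is a pointwise comparison: for some constant $C=C(\delta,\phi_{\max},N)$ with $\phi_{\max}:=\max_{[0,m]}\phi$,
\begin{equation*}
\Gamma(f_i,f_i)(\Delta_j(x))+\sum_{l=1}^{N}\Gamma(f_i,f_i)(\Delta_l(\Delta_j(x)))\leq C\,\Gamma(f_i,f_i)(x).
\end{equation*}
Under either branch of the hypothesis, $u\mapsto|g((u+w)\wedge m)-g(u)|$ is non-increasing while $u\mapsto|g(u)-g(0)|$ is non-decreasing. The triangle inequality $|g(0)-g((u+w)\wedge m)|\leq|g(0)-g(u)|+|g((u+w)\wedge m)-g(u)|$, combined with $\delta\leq\phi\leq\phi_{\max}$, allows each squared increment that appears after a single shift to be dominated by one of the two squared increments present in $\Gamma(f_i,f_i)(x)$, up to $C$. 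The only delicate case is the term $[g(w)-g(0)]^2$ produced whenever coordinate $i$ is zeroed by the doubly shifted composition: one would split $|g(w)-g(0)|\leq|g(w)-g(x^i)|+|g(x^i)-g(0)|$, bound $|g(w)-g(x^i)|$ by $|g(x^i+w)-g(x^i)|$ when $x^i\leq w$ (using concavity/convexity together with monotonicity), and by $|g(x^i)-g(0)|$ when $x^i>w$ (since $g(w)$ then lies between $g(0)$ and $g(x^i)$).

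Inserting this comparison into Theorem \ref{thmCon} yields the conclusion with $\zeta(t)=(1+CN+CN^2)\delta(t)$, which is still a polynomial of degree three. The main obstacle is establishing the pointwise comparison at the doubly shifted point; the monotonicity-plus-convexity hypothesis is precisely what is needed, because monotonicity provides sign control on differences of $g$, while concavity or convexity makes the modulus of the increment $u\mapsto|g(u+w)-g(u)|$ monotone in $u$, so that increments across one $w$-step can be bounded by increments elsewhere in the domain.
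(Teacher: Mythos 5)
Your overall strategy matches the paper's: apply Theorem \ref{thmCon} to $f_i$ and then absorb the shifted carr\'e du champ terms into $\Gamma(f_i,f_i)(x)$ by a pointwise comparison, giving $\zeta(t)$ as a constant multiple of $\delta(t)$. The monotonicity facts you extract from the convexity/monotonicity hypothesis (monotone $w$-step increments, monotone distance to $g(0)$) are also essentially those the paper uses.

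However, there is a genuine gap in your treatment of the term $[g(w)-g(0)]^2$ that appears whenever coordinate $i$ is reset to zero. You propose, for $0<x^i\leq w$, to bound $|g(w)-g(x^i)|$ by $|g(x^i+w)-g(x^i)|$ and then absorb the latter into $\Gamma(f_i,f_i)(x)$. But that second factor only appears in $\Gamma(f_i,f_i)(x)$ when $x^i+w\leq m$; when $0<x^i<w$ and $x^i+w>m$, the carr\'e du champ at $x$ reduces to $\tfrac12\phi(x^i)[g(0)-g(x^i)]^2$ alone, and for decreasing $g$ one has $|g(0)-g(x^i)|<|g(0)-g(w)|$ with the left side tending to $0$ as $x^i\to 0^+$, so no uniform constant can close the bound. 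The ingredient you are missing is the one the paper invokes explicitly and which is the real content of the corollary's restriction to $x\in D$: with equal weights $w$, every coordinate of a configuration in the domain of the invariant measure is either $0$ or a positive multiple of $w$, hence $x^i>0\Rightarrow x^i\geq w$. The problematic regime $0<x^i<w$ therefore never occurs; when $x^i=0$ the term $[g(w)-g(0)]^2$ already sits inside $\Gamma(f_i,f_i)(0)$, and when $x^i\geq w$ your own $x^i>w$ branch (dominating $|g(w)-g(x^i)|$ by $|g(x^i)-g(0)|$) applies. Without stating and using this structural fact about $D$, the pointwise comparison you rely on is not valid.
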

 This    modified log-Sobolev inequality  describes one neuron in the context of the whole system.
 
 It should be noted that the  main hindrance in obtaining a log-Sobolev inequality,  is  down to the degenerate character of the jump process under study, since the loss of memory of the spiking neuron  does not allow the     translation property 
$$\E^{x+y}f(X_t)=\E^{x}f(X_t+y)$$
used  in \cite{W-Y} and \cite{A-L} to show the relevant inequalities. The absence of the translation property   implies that the  inequalities $\Gamma (P_tf,P_tf)\leq P_t\Gamma (f,f)$ and $\sqrt{\Gamma (P_tf,P_tf)}\leq P_t\sqrt{\Gamma (f,f)}$ that are used to show Poincar\'e and log-Sobolev inequalities respectively  do not hold. This is directly related  with the $\Gamma_2$ criterion  (see \cite{Ba1} and \cite{Ba2}) which provides log-Sobolev and  Poincar\'e inequalities   (see also \cite{A-L}). Still, a weaker property shown here proves the modified log-Sobolev inequality of the theorem.  In that way, we see that despite the degenerate character of the process due to it's discontinuous jumps, when it enters the domain of the invariant measure, it does not behave very different from a non degenerate process which satisfies the typical sweeping out relation $\Gamma (P_tf,P_tf)\leq P_t\Gamma (f,f)$ and  the $\Gamma_2$ criterion. This is further testified by the concentration properties obtained bellow, as well as by the classical modified log-Sobolev inequality of Corollary \ref{corP}.  Still, both the higher than order  one   time constant and the additional two jump terms, highlight how much complicated this process is.  

As a direct result of the modified log-Sobolev inequality of Theorem \ref{thmCon} we obtain the following  concentration inequality. 
\begin{proposition}\label{Pcon}Assume the PJMP as described  in (\ref{eq:generator0})-(\ref{phi2})  and that $f_i$ is Lipschitz continuous functions that depend only on $x^{i}$,   with Lipschitz constant $1$. Consider $T>0$.  Then, for the function    $f(x)=\sum_{i=1}^{N}f_i(x^{i})$, $x\in D$, the following concentration inequality holds \begin{align*}  P\left(\left\vert f(X_{t})-\E^{x} f(X_t)\right\vert \geq  r \right)\leq Q e^{-r^{2}}  ,  \end{align*} 
for any $t\leq T$ and  a positive constant $Q$ that depends on $T$.
\end{proposition}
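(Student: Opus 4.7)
The approach is a Herbst-type exponential moment argument built on Theorem~\ref{thmCon}. Fix the starting configuration $x \in D$ and $t \leq T$, and define the Laplace transform
$$\psi(\lambda) := P_t(e^{\lambda f})(x) = \E^x\bigl[e^{\lambda f(X_t)}\bigr].$$
Substituting $e^{\lambda f}$ into the modified log-Sobolev inequality of Theorem~\ref{thmCon} rewrites the left-hand side as the entropy $\lambda \psi'(\lambda) - \psi(\lambda)\log \psi(\lambda)$, since at fixed starting point $x$ the quantity $P_t e^{\lambda f}(x)$ is the constant $\psi(\lambda)$.

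The next step is to bound the right-hand side by $C\delta(t)\lambda^2 \psi(\lambda)$ on a compact interval $\lambda \in [0,\Lambda]$. The core identity is
$$\frac{\Gamma(e^{\lambda f}, e^{\lambda f})(z)}{e^{\lambda f(y)}} = \frac{e^{2\lambda f(z) - \lambda f(y)}}{2}\sum_{i} \phi(z^{i})\bigl(e^{\lambda(f(\Delta_{i}z) - f(z))} - 1\bigr)^{2},$$
evaluated at the three points $z\in\{y,\Delta_j y, \Delta_i \Delta_j y\}$. The $1$-Lipschitz hypothesis on each $f_i$, the confinement of coordinates to $[0,m]$, and the bounded weights $W_{j\to i}$ give $|f(\Delta_i z)-f(z)|\leq M$ for a constant $M = M(N,m,W)$; the elementary inequality $(e^u-1)^2 \leq u^2 e^{2u^+}$ then produces a factor of order $\lambda^2 M^2 e^{2\lambda M}$. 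The prefactor $e^{2\lambda f(z)-\lambda f(y)} = e^{\lambda f(z)}\cdot e^{\lambda(f(z)-f(y))}$ splits off a bounded ratio $e^{k\lambda M}$ with $k \in \{0,2,4\}$ depending on the depth of composition of the $\Delta_j$'s. Taking the outer $P_t$-expectation, using continuity of $\phi$ on $[0,m]$ to secure $\|\phi\|_\infty<\infty$, and collecting constants, each of the three terms on the right of Theorem~\ref{thmCon} is dominated by a multiple of $\delta(t)\lambda^2\psi(\lambda)$ uniformly for $t\leq T$ and $\lambda\in[0,\Lambda]$.

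With the resulting Herbst differential inequality $\lambda\psi'(\lambda) - \psi(\lambda)\log\psi(\lambda)\leq C\delta(T)\lambda^2\psi(\lambda)$, the standard trick concludes: dividing by $\lambda^{2}\psi(\lambda)$ and integrating $H(\lambda) := \lambda^{-1}\log \psi(\lambda)$ from $0^+$, where $H(0^+) = \E^x f(X_t)$, yields
$$\psi(\lambda) \leq \exp\bigl(\lambda\E^x f(X_t) + C\delta(T)\lambda^2\bigr).$$
Chernoff's inequality with the optimal choice $\lambda = r/(2C\delta(T))$ then gives the one-sided deviation $\exp(-r^2/(4C\delta(T)))$; applying the same argument to $-f$, which is also a sum of $1$-Lipschitz functions, and adding the two tails produces the claimed bound, with $Q = Q(T)$ absorbing the factor $2$ together with the constant in the exponent after a rescaling.

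The principal obstacle is the nonstandard shape of Theorem~\ref{thmCon}: the additional terms $P_t(\Gamma(f,f)(\Delta_j x)/f(x))$ and $P_t(\Gamma(f,f)(\Delta_i(\Delta_j x))/f(x))$ generate mismatched ratios of exponentials of the form $e^{\lambda f(\Delta_j z)}/e^{\lambda f(y)}$ once $f$ is replaced by $e^{\lambda f}$. Controlling them requires a careful use of the Lipschitz bound $M$ and of the bounded jump structure, and works cleanly only on a compact $\lambda$-range; this forces the final estimate to be genuinely sub-Gaussian only on a bounded range of $r$, which is precisely why the constant $Q$ in the statement is allowed to depend on the time horizon $T$.
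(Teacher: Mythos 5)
Your proposal follows the same route as the paper: substitute $e^{\lambda f}$ into Theorem~\ref{thmCon}, use the Lipschitz property of each $f_i$ together with the boundedness of $[0,m]^N$ and of the weights to dominate all three carré du champ terms by $\lambda^2 C(T)\, e^{\lambda f}$, and then run the standard Herbst differential inequality and Chernoff optimization (with the boundedness of $f$ absorbing the mismatch between $\exp(-r^2/(4C\delta(T)))$ and $\exp(-r^2)$ into $Q$). Your handling of the shifted terms $\Gamma(e^{\lambda f},e^{\lambda f})(\Delta_j x)$ and $\Gamma(e^{\lambda f},e^{\lambda f})(\Delta_i\Delta_j x)$ via the bounded ratio $e^{\lambda(f(\Delta_j x)-f(x))}$ is precisely what the paper does when it proves the bounds $\Gamma(e^{\lambda f},e^{\lambda f})(\Delta_i x)\leq \lambda^2 D_2 e^{2\lambda f(x)}$ and its two-jump analogue.
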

It should be noted that the  above concentration inequality is the concentration property that is typically derived from the standard modified log-Sobolev inequality. 

Concentration properties like this one are of course more  interesting   in the context of  non-bounded random variables, and less in the   context of finite many bounded valued random variables as in Proposition \ref{Pcon}, although in the context of the current paper they highlight the closeness between the modified inequality of Theorem \ref{thmCon} and the standard inequality of Corollary \ref{corP}. 

More interesting are concentration inequalities for empirical measures, as shown in the   following theorem. These concentration estimates show that the empirical  estimates stabilize exponentially fast as the number of observables  goes to infinity.
 \begin{theorem}\label{empthm} Consider some $T>0$ and $i\in\{1,...,N\}$. Assume $f:\R\rightarrow \R$ a Lipschitz function, with Lipschitz constant 1, and a     sequence of times $0=t_1<t_2<...<t_n\leq  T$,  such that \begin{align}\label{condit1}\sum_{k=1}^{\infty}  \delta(t_k-t_{k-1})  3^{k}\sum_{r=1}^{k+1} (Nd)^{r+4}<\infty,\end{align}
for  $\delta(t)$ as in Theorem \ref{thmCon} and a constant  $d=\frac{M (\max_ {ij}w_{ij})^{2}}{2}   e^{2 m}$. Then,   for any $x\in D$, we have 
  \begin{align*}  P\left(\left\vert \frac{\sum_{k=1}^{n}f(X^{i}_{t_k})}{n}-\frac{\sum_{k=1}^{n} \E^{x}[f(X^{i}_{t_k})]}{n}\right\vert \geq  \epsilon  \right)\leq G e^{-\epsilon n}   \end{align*} 
 where $G$ a strictly positive constant.
 \end{theorem}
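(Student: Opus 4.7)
The plan is a Herbst-type argument applied at the level of cylindrical functionals of $(X_{t_1},\dots,X_{t_n})$, combined with an iterated version of the modified log-Sobolev inequality of Theorem \ref{thmCon}. I first set $F := \sum_{k=1}^n f(X^i_{t_k}) - \sum_{k=1}^n \E^x[f(X^i_{t_k})]$ and use the exponential Chebyshev bound $P(F \geq \epsilon n) \leq e^{-\lambda \epsilon n}\,\E^x[e^{\lambda F}]$ (and symmetrically for the lower tail), so that it suffices to show $\E^x[e^{\lambda F}] \leq G_0$ for a fixed $\lambda > 0$ depending only on the constants appearing in (\ref{condit1}).

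The main technical step is to iterate Theorem \ref{thmCon} across the $n$ time slices in order to produce a cylindrical modified log-Sobolev inequality for functionals $G = g(X_{t_1},\dots,X_{t_n})$ of the schematic form
$$\E^x\!\left[G\log\frac{G}{\E^x G}\right] \leq K_n\sum_{r,\,j_1,\dots,j_r}\E^x\!\left[\frac{\Gamma(G,G)\bigl(\Delta_{j_1}\cdots\Delta_{j_r}(\cdot)\bigr)}{G}\right],$$
where the sum is over finitely many iterated jump indices. The argument is inductive: using the Markov property I decompose
$$\text{Ent}^x(G) = \E^x\bigl[\text{Ent}(G \mid X_{t_{n-1}})\bigr] + \text{Ent}^x\bigl(\E[G \mid X_{t_{n-1}}]\bigr),$$
then apply Theorem \ref{thmCon} (at time $t_n - t_{n-1}$) to the first, one-slice conditional term and the inductive hypothesis to the second. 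Each layer introduces the prefactor $\delta(t_k - t_{k-1})$ and, because the right-hand side of Theorem \ref{thmCon} has three carré du champ contributions (at $x$, at $\Delta_j(x)$, and at $\Delta_i(\Delta_j(x))$), triples the number of carré du champ terms, giving the $3^k$ factor in (\ref{condit1}). The combinatorics of the iterated operators $\Delta_{j_1}\circ\cdots\circ\Delta_{j_r}$ over the $N$ neurons, together with the bound $(e^u-1)^2 \leq u^2 e^{2|u|}$ used to compare $\Gamma$ at shifted points, accounts for the $(Nd)^{r+4}$ weights; (\ref{condit1}) is exactly the summability that makes $K_n$ uniformly bounded in $n$.

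With the cylindrical inequality in hand, I apply it to $G = e^{\lambda F}$. Using
$$\Gamma(e^{\lambda F}, e^{\lambda F})(y) = \tfrac12 \sum_j \phi(y^j)\bigl(e^{\lambda F(\Delta_j y)} - e^{\lambda F(y)}\bigr)^2,$$
the elementary inequality $(e^u-1)^2\leq u^2 e^{2|u|}$, and the fact that $f$ being $1$-Lipschitz forces $|F(\Delta_j y)-F(y)|$ to be bounded uniformly in $y$, each of the ratios $\Gamma(e^{\lambda F},e^{\lambda F})/e^{\lambda F}$ and its shifted analogues on the right-hand side of the cylindrical inequality is bounded by $\lambda^2 A\, e^{\lambda F}$, for a constant $A$ depending only on $m$, $\max_{ij} w_{ij}$ and the sup-norm of $\phi$. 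The resulting differential inequality for $H(\lambda) := \lambda^{-1}\log \E^x[e^{\lambda F}]$ integrates (with $H(0^+)=0$) to $\log \E^x[e^{\lambda F}] \leq K\lambda$ for all $\lambda$ below an explicit threshold; plugging back into the Chebyshev step and fixing $\lambda$ at a convenient small value yields the bound $Ge^{-\epsilon n}$.

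The crux is the cylindrical extension: each layer of the induction multiplies the number of carré du champ terms by three and pushes the iterated jump depth one step deeper, and it is precisely to balance this combinatorial explosion against the polynomial time constant $\delta(t_k - t_{k-1})$ that assumption (\ref{condit1}) is engineered. Once this extended inequality is in place, the Herbst argument of the last paragraph is essentially routine.
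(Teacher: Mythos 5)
Your proposal follows essentially the same route as the paper's proof: the paper also telescopes the multi-time entropy across successive time slices (equation \eqref{inducEn} in the proof of Proposition \ref{Propo2}), applies the modified log-Sobolev inequality of Theorem \ref{thmCon} at each slice, recursively sweeps the carr\'e du champ past the $\hat P_{t_{k-1}}$'s via Lemma \ref{InducLem} (producing the $3^k$ and iterated-jump $(Nd)^{r+4}$ combinatorics you describe), bounds the resulting carr\'e du champ ratios using the Lipschitz hypothesis and compactness (Lemma \ref{lemma6}), and finishes with the Herbst integration you outline. One small precision to note when writing this up: for the entropy decomposition to isolate a genuine one-slice term you should condition on the filtration $\mathcal{F}_{t_{n-1}}=\sigma(X_{t_1},\dots,X_{t_{n-1}})$ rather than on $X_{t_{n-1}}$ alone, since $G$ depends on all $n$ observation times and conditioning only on $X_{t_{n-1}}$ leaves both the past coordinates and $X_{t_n}$ random.
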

 One should notice that the concentration approximation of the theorem, provides a measure of intrinsic proximity to equilibrium, since as  $n$ goes to infinity, the  $\frac{\sum_{k=1}^{n}\E^{x}[f(X^{i}_{t_k})]}{n}$ converges a.s, by the Statistical Ergodic Theorem (see \cite{Yo}),  to the  expectation of   $f$ with respect to the invariant measure $\mu (f)$.  
 Since $\delta(t)$ is an increasing polynomial of order three with    $\delta(0)=0$, in order to have condition  (\ref{condit1}) satisfied, we just need the distance of consequential times to decrease in such a way that 
 \[\delta(t_k-t_{k-1}) \leq (\frac{2}{3})^{k} \left( \sum_{r=1}^{k+1} (Nd)^{r+4 } \right)^{-1}.\]
  From a technical point of view, contrary to the concentration inequalities of Proposition \ref{Pcon}, the empirical concentration inequalities present the challenge of involving non bounded quantities, since they involve concentrations of      sums $\sum_{k=1}^{n}f(X^{i}_{t_k})$, for $n$ going to infinity.  One should observe that the empirical  concentration property does not follow directly from Proposition \ref{Pcon}, as will be  discussed in section \ref{secCon}, but from an extension of the modified inequality for cylindrical functions.

A few words about the structure of the paper. As already mentioned, the absence of the translation property, poses difficulties in obtaining the appropriate sweeping out relations of the form  $\Gamma(P_t f, P_t f)\leq P_t \Gamma (f,f)$, to prove   modified log-Sobolev inequalities. Thus, in the first section   \ref{local} of the paper where we show the modified log-Sobolev inequalities, we focus on obtaining an alternative weaker sweeping out inequality (Lemma \ref{lastLem}). Since, the process we study is characterized by degenerate jumps, the inequality we obtain involves additional terms that include the carr\'e du champ after the first jump. As a result, the modified inequality we obtain in Theorem  \ref{thmCon}  also includes some additional  terms that involve the carr\'e du champ after the first jump as well. 

These additional terms however, do not alter in essence    the inequality, since as we show in Proposition \ref{Pcon}, it implies the same concentration properties with the typical modified log-Sobolev inequality (\ref{tLogSob}). The proof of these concentration properties are presented at the very end of the paper in section \ref{Pcon2}.  Furthermore, we see that for a class of functions the inequality presented in Theorem  \ref{thmCon} can be reduced to that of the  typical form  (\ref{tLogSob}) as presented in 
Corollary \ref{corP}. The proof of this result follows   the proof of the main modified log-Sobolev inequality, at section \ref{proofcor}.   

The empirical concentration inequality   of Theorem \ref{empthm} is presented in \ref{proofEmpCon}.  Since the property refers to cylindrical multi-times functions, to show the concentration properties we first extend some of the properties  obtained in \ref{local}   to cylindrical functions, as for instance is a generalised sweeping out relation shown in Lemma \ref{InducLem}. 

 \section{proof of the modified log-Sobolev inequality. \label{local}}
 
 We start by showing some technical results.

\subsection{Technical results.}
We start by  showing properties of  the jump probabilities of the degenerate PJMP processes. 
Our process is restricted on  the compact  domain
$D':=\{x \in \R_+^N: x^{i} \leq m , \, 1 \leq i \leq N \} $. Since we exclusively study configurations on the domain of the invariant measure $\mu$, that is $D=\left\{ x\in D': \mu (x)>0  \right\}$,   we write $D$, for the elements of  $D'$ that belong to the domain  of the invariant measure.  Since each of the finite many neurons can visit only a finite number of positions, by standard arguments of  finite dimensional compact  discreet Markov Chains following from the Perron-Frobenius theorem (see for instance \cite{SC} and \cite{No}), we conclude that the invariant measure $\mu$ exists and is unique. Following the same argumentation, if we denote the probability the process starting from $x$ to be at $y$ after time $t$ by
$$\pi_t(x,y):=P_x(X_t=y),$$
then 
\[\lim_{t\rightarrow \infty}\pi_t(x,y)=\mu (y)\geq e>0,\] 
for some $e>0$ uniformly on $x$ and $y$.
If we define   the set of reachable positions of the process starting from $x$ after time $t$ as $D_x:=\{y \in D, \pi_t(x,y) >0\}$, 
  then one should observe that since there is not movement between two consecutive spikes, $D_x$ is finite. 

For any time $s \in \R_+$ and    $x \in D,$ we denote by $p_s(x)$ the probability that starting at time $0$ from position $x,$ the process has no jump in the interval $[0,s]$. Then, if we denote $\overline{\phi}(x)=\sum_{j \in I} \phi(x^{j})$, we have
\[p_s(x)= e^{-s \overline{\phi}(x)}.
\] 
Furthermore, for a given neuron $i \in I$  denote by $p_s^i(x)$ the probability that in the interval $[0,s]$ only the  neuron $i$ spikes, and it does exactly one time. Then,  for every $x\in D$  s.t. $\overline{\phi}(\Delta^i(x)) \neq \overline{\phi}(x),$ we compute
\[
p_s^i(x)= \int_0^s \phi(x^{i}) e^{-u\overline{\phi}(x)}e^{-(s-u)\overline{\phi}(\Delta^i(x))}du 
 = \frac{\phi(x^{i})}{\overline{\phi}(x) - \overline{\phi}(\Delta^i(x))} \left( e^{-s\overline{\phi}(\Delta^i(x))} - e^{-s\overline{\phi}(x) } \right) ,
\]
while \[p^i_s(x)=s \phi(x^{i})e^{-s \overline{\phi}(x)}\] when $ \overline{\phi}(\Delta^i(x)) = \overline{\phi}(x) $. One should observe that   $p_s^i(x)$ is  continuous, strictly increasing on $(0, t_0(i,x))$ and strictly decreasing on $(t_0(i,x),+\infty)$, for 
$t_0(i,x)=   
\frac{ln\left( \overline{\phi}(x) \right) -ln \left( \overline{\phi}(\Delta^i(x)) \right)}{\overline{\phi}(x) - \overline{\phi}(\Delta^i(x))} $ when $ \overline{\phi}(\Delta^i(x)) \neq \overline{\phi}(x)$ and $t_0(i,x)=
\frac{1}{\overline{\phi}(x)}$ when $\overline{\phi}(\Delta^i(x)) = \overline{\phi}(x) $. The following two lemmata follow partly technics applied   in  \cite{H-P} to show  similar  bounds, only that in the current paper, taking advantage of the restriction to configurations on the domain of the invariant measure, we obtain stronger results. 

Since by the construction of the process, both the number of neurons  and the  cardinality of $D$ are bounded, we can define
\[t_0:=\min_{x\in D, i\in \{1,...,N\}}t_0(i,x),\]
which is strictly positive.
 \begin{lemma}\label{frac1} 
 Assume the PJMP as described in (\ref{eq:generator0})-(\ref{phi2}).There exists a positive constant $C_{1.1}$  such that for every  $u\leq t$
 \[
 \frac{\pi_u (x,y)}{\pi_t(x,y)} \leq C_{1.1} .  \]
  \end{lemma}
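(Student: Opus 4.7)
The plan is to split the argument into two complementary time regimes according to whether $t$ is large or small, exploiting two different lower bounds on $\pi_t(x,y)$.

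First I would record two preliminary facts that make each regime work. Since $D$ is finite and the intensity $\phi$ takes only finitely many values on $D$, the total jump rate $\overline{\phi}$ is uniformly bounded above by some $\Phi_{\max}<\infty$ on $D$; paired with \eqref{phi2} this yields $N\delta \leq \overline{\phi}(y)\leq N\Phi_{\max}$ for every $y\in D$. Second, the uniform convergence $\pi_t(x,y)\to\mu(y)\geq e>0$ recalled just before the lemma produces a time $T_0$ with $\pi_t(x,y)\geq e/2$ for all $t\geq T_0$ and all $x,y\in D$.

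In the long-time regime $t\geq T_0$ the estimate is immediate: bounding the numerator by $\pi_u(x,y)\leq 1$ and the denominator from below by $e/2$ gives $\pi_u(x,y)/\pi_t(x,y)\leq 2/e$. For the short-time regime $t<T_0$, I would use the Chapman--Kolmogorov decomposition at the intermediate time $u$, retaining only the diagonal term $z=y$ and then the no-jump lower bound on $\pi_{t-u}(y,y)$:
\[
\pi_t(x,y) \;=\; \sum_{z\in D} \pi_u(x,z)\,\pi_{t-u}(z,y) \;\geq\; \pi_u(x,y)\,\pi_{t-u}(y,y) \;\geq\; \pi_u(x,y)\,e^{-(t-u)\overline{\phi}(y)},
\]
since staying put at $y$ throughout $[u,t]$ is one way to realise $X_t=y$ starting from $X_u=y$. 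Dividing yields $\pi_u(x,y)/\pi_t(x,y)\leq e^{(t-u)N\Phi_{\max}}\leq e^{T_0 N\Phi_{\max}}$, and choosing $C_{1.1}:=\max\{2/e,\,e^{T_0 N\Phi_{\max}}\}$ covers both regimes simultaneously. When $\pi_u(x,y)=0$ the claim is trivial, and the same Chapman--Kolmogorov bound incidentally shows that $\pi_t(x,y)>0$ whenever $\pi_u(x,y)>0$, so the ratio is well defined throughout.

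The main (and essentially only) non-routine point is justifying the uniform upper bound $\overline{\phi}\leq N\Phi_{\max}$; this is automatic here because $D$ is finite, whereas on a non-compact state space one would need an explicit boundedness hypothesis on $\phi$. Everything else rests on standard ingredients: convergence to the invariant measure on long scales, and the survival estimate $e^{-s\overline{\phi}}$ on short scales, glued together by Chapman--Kolmogorov.
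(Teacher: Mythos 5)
Your proof is correct and follows essentially the same two-regime strategy as the paper: for large $t$ you invoke the convergence $\pi_t(x,y)\to\mu(y)\geq e>0$ to lower-bound the denominator, and for small $t$ you apply Chapman--Kolmogorov at time $u$ together with the no-jump probability $p_{t-u}(y)=e^{-(t-u)\overline{\phi}(y)}$ to get $\pi_t(x,y)\geq \pi_u(x,y)\,e^{-(t-u)\overline{\phi}(y)}$, which is exactly the paper's inequality $\pi_t(x,y)\geq\pi_u(x,y)p_{t-u}(y)$. The only cosmetic differences are your explicit spelling out of the Chapman--Kolmogorov step, a slightly different threshold constant ($e/2$ versus $e$), and the remark on well-definedness when $\pi_u(x,y)=0$, none of which change the argument.
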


  \begin{proof}
Since $D$ is finite,   there exists    a constant $e>0$, such that for every $x\in   D$, one has $\mu (x)>e>0$. Since, $\lim_{t\rightarrow \infty }\pi_t(x,y)=\mu(y)$ for every $x,y \in  D$  we conclude that
\begin{align} \label{timeHt}\exists\ \hat t>0:\forall t\geq \hat t,  \pi_t(x,y)>e, \forall x,y\in D.\end{align} We can then write 
\[  \frac{\pi_u (x,y)}{\pi_t(x,y)} \leq \frac{1}{e},\]
which proves the bound for every $t\geq \hat t$.
It remains to show the same result for   the case  $t\leq\hat t$. Since $u\leq t$ we can write 
\[\pi_t(x,y)\geq \pi_u(x,y)p_{t-u}(y)= \pi_u(x,y)e^{-(t-u) \overline{\phi}(y)}\geq \pi_u(x,y)e^{-\hat t N\hat \phi},\]
where above we have denoted $\hat \phi:=\sum_{x\in[0,m]}\phi(x)$. This  implies
  \[ \frac{\pi_u(x,y)}{\pi_t(x,y)} \leq e^{ \hat t N\hat \phi}\]
  for every $t\leq \hat t$.

 \end{proof}

 \begin{lemma}\label{frac2} 
 Assume the PJMP as described in (\ref{eq:generator0})-(\ref{phi2}).There exists a positive constant $C_{1.2}$  such that for every  $u\leq t$
    \[
  \frac{\pi_u^2(\Delta^i(x),y)}{\pi_t(x,y)} \leq C_{1.2} 
  \]
  for every $t\geq t_0$, as well as, for every $t\leq t_0$   $\forall y \in D_x\setminus \{\Delta^i(x)\}$.
 \end{lemma}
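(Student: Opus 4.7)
The plan is to split at $t = t_0$: the regime $t\ge t_0$ follows the template of Lemma \ref{frac1}, while $t\le t_0$ is the delicate one where the hypothesis $y\neq\Delta^i(x)$ is used essentially.

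For $t\ge t_0$, if $t\ge \hat t$ (with $\hat t$ as in (\ref{timeHt})) then $\pi_t(x,y)\ge e$, and the trivial bound $\pi_u^2(\Delta^i(x),y)\le 1$ gives the ratio $\le 1/e$. For $t\in[t_0,\hat t]$, continuity of $s\mapsto\pi_s(x,y)$ together with its positivity on $(0,\infty)$ for $y\in D_x$ (the chain restricted to $D$ being irreducible, so $\pi_s(x,y)$ is analytic and strictly positive there) yields a strictly positive infimum over this compact interval; since the set of triples $(x,y,i)$ is finite, taking a maximum produces a finite constant.

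The regime $t\le t_0$ is harder because $\pi_t(x,y)\to 0$ as $t\to 0$ whenever $y\neq x$, so the bare $\pi_u^2\le 1$ is too weak; the saving comes from $\pi_u(\Delta^i(x),y)$ also vanishing, since $y\neq\Delta^i(x)$ forces at least one jump starting from $\Delta^i(x)$. Let $\ell$ be the minimal number of jumps of the chain from $x$ to $y$ and $k\ge 1$ the minimal number from $\Delta^i(x)$ to $y$. A Poisson-tail estimate on the total jump count $N_u$ in $[0,u]$ (the rate being bounded by $N\hat\phi$) gives
\[
 \pi_u(\Delta^i(x),y)\le P^{\Delta^i(x)}(N_u\ge k)\le \frac{(uN\hat\phi)^k}{k!},
\]
while fixing a minimal length-$\ell$ path $x=z_0\to\cdots\to z_\ell=y$ and computing the probability of following it exactly, via the explicit PJMP interarrival densities appearing in the definition of $p^i_s(x)$, yields a lower bound $\pi_t(x,y)\ge c\,t^\ell$ with $c>0$ uniform in $t\in(0,t_0]$. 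Combining these with $u\le t$,
\[
\frac{\pi_u^2(\Delta^i(x),y)}{\pi_t(x,y)}\le \frac{(N\hat\phi)^{2k}}{c(k!)^2}\,t^{2k-\ell},
\]
so the argument reduces to showing $2k\ge \ell$. The triangle inequality in the jump graph, using $\mathrm{dist}(x,\Delta^i(x))=1$, gives $k\ge \ell-1$; a short case check for $\ell\in\{0,1\}$ (using $k\ge 1$ since $y\neq\Delta^i(x)$) finishes this, and a maximum over the finite set of triples $(x,y,i)$ produces the uniform constant $C_{1.2}$.

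The main obstacle I expect is precisely this distance inequality, which is only barely satisfied --- equality occurs for example when $y=\Delta^j(x)$ with $j\neq i$, giving $\ell=k=1$. It fails precisely in the excluded case $k=0$, namely $y=\Delta^i(x)$; this is the arithmetic reason for both the exclusion in the hypothesis and the appearance of the square on $\pi_u$ in the statement.
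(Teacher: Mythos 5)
Your proof is correct, and it takes a genuinely different route from the paper's. The paper handles the regime $t_0<t\leq\hat t$ (and then reuses the same device for $t\leq t_0$) by locating a time $s_*\in(0,t_0)$ at which $p^i_{s_*}(x)=\pi_{t-s_*}(\Delta^i(x),y)$ and then splitting the denominator as $\pi_t(x,y)\geq p^i_{s_*}(x)\pi_{t-s_*}(\Delta^i(x),y)$, followed by a direct estimate of $(1-e^{-s_*N\hat\phi})/p^i_{s_*}(x)$; this is a continuous-time matching argument and yields fairly explicit constants. You instead (i) dispose of the middle range $[t_0,\hat t]$ by a soft compactness-and-positivity argument on $s\mapsto\pi_s(x,y)$ over a finite set of triples $(x,y,i)$, and (ii) for $t\leq t_0$ replace the matching argument by a combinatorial comparison of jump distances: the Poisson-tail bound $\pi_u(\Delta^i(x),y)\leq(uN\hat\phi)^k/k!$ in the numerator (with $k=\mathrm{dist}(\Delta^i(x),y)\geq 1$ precisely because $y\neq\Delta^i(x)$), the path lower bound $\pi_t(x,y)\geq ct^\ell$ in the denominator (with $\ell=\mathrm{dist}(x,y)$, $c$ uniform over $(0,t_0]$ via the PJMP interarrival densities and the lower bound $\phi\geq\delta$), and the arithmetic $2k\geq\ell$ coming from $\mathrm{dist}(x,\Delta^i(x))=1$ plus the case check $\ell\in\{0,1\}$. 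Both approaches work; yours is less explicit about the constant where compactness is invoked, but it cleanly isolates the role of the square on $\pi_u$ and the necessity of excluding $y=\Delta^i(x)$ (the $k=0$ case), which is somewhat opaque in the paper's $s_*$-based proof.
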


  \begin{proof} For  $\hat t$ as in (\ref{timeHt}), we distinguish three separate cases:

\  \ \ \ \   \   \   \   \   \   \   \   \  \     \     \ (A) $t\geq \hat t$, (B) $t_0<t\leq \hat t$ and (C) $t \leq  t_0$.

A) At first we examine   the case $t\geq \hat t$. As in the previous lemma,   for every $t\geq \hat t$, we have  $\pi_t(x,y)>e$, which directly  leads to the following bound
     \[
  \frac{\pi_u^2(\Delta^i(x),y)}{\pi_t(x,y)} \leq \frac{1}{e}
  \]
  for every $t\geq \hat t$.

B)  We now study the case  $t_0<t\leq \hat t$. Here we also distinguish over separate subcases (B1) and (B2). 
  
B1) If $ p_{t_0}^i(x)\leq \pi_{t-t_0}(\Delta^i(x),y)$, we can then write  \[\pi_{t}(x,y) \geq p_{t_0}^i(x)\pi_{t-t_0}(\Delta^i(x),y)\geq ( p_{t_0}^i(x))^2,\]
in order to bound the denominator. The numerator can be bounded by $\pi_u (\Delta^i(x),y)\leq 1$. This gives the bound
\[\frac{\pi_u^2(\Delta^i(x),y)}{\pi_{t}(x,y)} \leq \frac{1}{(p_{t_0}^i(x))^2}.
\]
 
B2) Now consider   $\pi_{t- t_0}(\Delta^i(x),y) < p_{ t_0}^i(x)$  and  recall that $p_s^i(x)$ as a function of $s$ is continuous, strictly increasing on $(0,  t_0)$ with $p_0^i(x)=0$. Also, $\pi_{t-s}(\Delta^i(x),y)$ as a function of $s$ is continuous and takes value $\pi_{t}(\Delta^i(x),y) > 0$ for $s=0$.  We conclude that  there exists $s_* \in (0, t_0)$ such that $p_{s_*}^i(x) = \pi_{t-s_*}(\Delta^i(x),y).$

Once more we will consider two subcases. 

B2.1) At first assume that  $u\leq t-s_* $. Then  
\[\frac{\pi_u^2(\Delta^i(x),y)}{\pi_{t}(x,y)} \leq \frac{\pi_u^2(\Delta^i(x),y)}{p_{s_*}^i(x)\pi_{t-s_*}(\Delta^i(x),y)}=\frac{\pi_u^2(\Delta^i(x),y)}{ \pi_{t-s_*}^{2}(\Delta^i(x),y)}\leq C^2_{1.1} \]from  Lemma \ref{frac1}.

B2.2) Now we consider the case where  $u\geq t-s_* $. We can write
\begin{equation}\label{fract}
\frac{\pi_u^2(\Delta^i(x),y)}{\pi_{t}(x,y)} \leq \frac{\left(\pi_{u-s'}(\Delta^i(x),y)p_{s'}(y) + \sup_{z \in D} (1-p_{s'}(z)) \right)^2}{p_{s}^i(x)\pi_{t-s}(\Delta^i(x),y)}
\end{equation}
for any $  s'\in(0,u)$ and $s\in (0,t)$. If we choose  $s= s_*$ and $s'\geq 0$ s.t. $u-s'=t-s_*$ we get
 \[
\frac{\pi_u^2(\Delta^i(x),y)}{\pi_t(x,y)} \leq (p_{s'}(y))^2+2 p_{s'}(y) \frac{1- e^{-s_*N\hat \phi}}{p_{s_*}^i(x)} + \left( \frac{1- e^{-s_*N\hat \phi}}{p_{s_*}^i(x)} \right)^2,
\]where above we also use that since $u\leq t$ we have $s'\leq s_*$ and so  $ 1- e^{-s'N\hat \phi}\leq 1- e^{-s_*N\hat \phi}$. To bound the right hand side, we need to bound $\frac{1- e^{-s_*N\hat \phi}}{p_{s_*}^i(x)}$. Since $s_*\leq t_0$, we obtain
\[
\frac{1- e^{-s_*N\hat \phi}}{p_{s_*}^i(x)} \leq \left\{
\begin{array}{ll}
\frac{e^{ t_0N\phi(m)}}{\delta} \, \frac{\left( \overline{\phi}(x) - \overline{\phi}(\Delta^i(x))\right) \left( 1-e^{-s_*N\hat \phi}\right)}{1-e^{-s_*\left( \overline{\phi}(x) - \overline{\phi}(\Delta^i(x))\right)}} & \, \mbox{if } \,  \overline{\phi}(\Delta^i(x)) \neq \overline{\phi}(x) \\ \\
\frac{e^{ t_0N\hat \phi}}{\delta} \frac{1-e^{-s_*}}{s_*} & \, \mbox{if } \, \overline{\phi}(\Delta^i(x)) = \overline{\phi}(x) 
\end{array}
\right\},
\]
where above we also used the lower bound   $\phi(x) \geq \delta$ from condition (\ref{phi2}). One notices that when $s_*$ goes to zero we obtain a bound that depends on $t_0$.  Since the right hand side is bounded uniformly for every $s_*\leq t_0$ we obtain the desirable bound.

C) To finish the proof, it remains to consider the case  where $t \leq  t_0$ and $y \neq \Delta^i(x)$. We will use  \eqref{fract} again.  Since $\pi_{t-s}(\Delta^i(x),y)$ is continuous as a function of $s$ and takes values $\pi_{t}(\Delta^i(x),y) > 0$ and $\pi_0(\Delta^i(x),y) = 0$ respectively for $s=0$ and $s=t$, we deduce that there exists $s_* \in (0,t) \subset (0,t_0)$ such that $p_{s_*}^i(x) = \pi_{t-s_*}(\Delta^i(x),y)$ and we are back in the previous case, and  so the   desirable bound follows similarly to (B2.2).  
 \end{proof}

\subsection{modified log-Sobolev inequality}\label{proofCon1}

We start by showing some useful lemmata that will be used to bound the entropy of the semigroup.

 \begin{lemma}\label{bound1}  Assume the PJMP as described in  (\ref{eq:generator0})-(\ref{phi2}). If $t-s\geq t_0$, then for every $x\in D$
   \begin{align*} \II_1:=\left(\int_{0}^{t-s}(\E^{\Delta_i ( x) }-\E^x)( \Ge f(X_{u})) du\right)^2 \leq &  \\ 2 (t-s)^2 MC_1 & P_{t-s}\left(\frac{\Gamma (f , f)(y)}{f(y)}\right) P_{t-s}(f(x))  . 
 \end{align*}
\end{lemma}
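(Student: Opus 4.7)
The plan is to turn $\II_1$ into a product of two $P_{t-s}$-semigroups — one of $f$ and one of $\Gamma(f,f)/f$ — via successive Cauchy--Schwarz estimates, then to absorb every intermediate $P_u$ into a $P_{t-s}$ using the kernel-ratio bounds of Lemmata \ref{frac1} and \ref{frac2}. Writing
$$\int_0^{t-s}(\E^{\Delta_i(x)}-\E^x)\Ge f(X_u)\, du \;=\; \int_0^{t-s}\bigl[P_u\Ge f(\Delta_i(x)) - P_u\Ge f(x)\bigr]\,du,$$
a first Cauchy--Schwarz in the time variable yields the prefactor $(t-s)$ and reduces $\II_1$ to a uniform pointwise control of $[P_u\Ge f(\Delta_i(x)) - P_u\Ge f(x)]^2$ for $u\in[0,t-s]$.

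For the pointwise bound I would expand $\Ge f(y)=\sum_j\phi(y^j)[f(\Delta_j(y))-f(y)]$, write the semigroup difference as $\sum_{y,j}a_y\,\phi(y^j)[f(\Delta_j(y))-f(y)]$ with $a_y := \pi_u(\Delta_i(x),y)-\pi_u(x,y)$, and apply Cauchy--Schwarz with the canonical entropic splitting $1=\sqrt{f(y)}\cdot(1/\sqrt{f(y)})$. Using $|a_y|\leq\pi_u(\Delta_i(x),y)+\pi_u(x,y)$, $\sum_j\phi(y^j)\leq M$, and $\sum_j\phi(y^j)[f(\Delta_j(y))-f(y)]^2=2\Gamma(f,f)(y)$, one obtains
\begin{align*}
\bigl[P_u\Ge f(\Delta_i(x))-P_u\Ge f(x)\bigr]^2 \leq\ &2M\bigl(P_u f(\Delta_i(x))+P_u f(x)\bigr) \\
&\times\Bigl(P_u\bigl(\tfrac{\Gamma(f,f)}{f}\bigr)(\Delta_i(x))+P_u\bigl(\tfrac{\Gamma(f,f)}{f}\bigr)(x)\Bigr).
\end{align*}

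The final step transports each $P_u$-factor into $P_{t-s}$ evaluated at $x$. For the factors at $x$ this is immediate from Lemma \ref{frac1}: $P_u g(x)\leq C_{1.1} P_{t-s}g(x)$ for any $g\geq 0$. For the factors at $\Delta_i(x)$, one combines Lemma \ref{frac2} with a uniform strictly positive lower bound on $\pi_{t-s}(x,y)$ over the finite set $D_x$ (for $t-s\geq \hat t$ this is just $e$, and for $t-s\in[t_0,\hat t]$ one exhibits a fixed admissible spike sequence from $x$ to $y$) to deduce $\pi_u(\Delta_i(x),y)\leq C'\pi_{t-s}(x,y)$ and hence $P_u g(\Delta_i(x))\leq C' P_{t-s}g(x)$. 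Substituting, the integrand becomes a constant multiple of $P_{t-s}(f)(x)\cdot P_{t-s}(\Gamma(f,f)/f)(x)$ independent of $u$; integrating over $u\in[0,t-s]$ supplies the second factor of $(t-s)$ and the announced bound, with $C_1$ absorbing $C_{1.1}^2$, $C'^{\,2}$ and the combinatorial constants.

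The main obstacle is precisely the last transport at $\Delta_i(x)$: Lemma \ref{frac2} supplies only a quadratic estimate $\pi_u^2\leq C_{1.2}\pi_{t-s}$, so upgrading it to the linear control $\pi_u\leq C'\pi_{t-s}$ needed above forces one to exploit the finite-state structure of $D$ and a lower bound on $\pi_{t-s}$ that is uniform over the compact time range $[t_0,\hat t]$ — which is exactly why the hypothesis $t-s\geq t_0$ cannot be relaxed.
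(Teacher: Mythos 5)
Your proposal reaches the right estimate, and you correctly identify the crux; but the route you take at the transport step is genuinely different from the paper's, and the difference matters. After the Jensen-in-time step and the entropic Cauchy--Schwarz (which both the paper and you perform, though the paper first splits $(A-B)^2\leq 2A^2+2B^2$ into the two separate kernels $\pi_u(\Delta_i(x),\cdot)$ and $\pi_u(x,\cdot)$, while you keep $|a_y|$ and then bound it by the sum), your bound produces the \emph{linear} quantities $P_u g(\Delta_i(x))$, so you need the linear ratio control $\pi_u(\Delta_i(x),y)\leq C'\,\pi_{t-s}(x,y)$. As you note, Lemma~\ref{frac2} does \emph{not} supply this: it gives only $\pi_u^2(\Delta_i(x),y)\leq C_{1.2}\,\pi_{t-s}(x,y)$, and dividing by $\sqrt{\pi_{t-s}}$ loses control as $\pi_{t-s}\to 0$. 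You therefore have to manufacture a separate uniform lower bound on $\pi_{t-s}(x,y)$ over $y\in D_{\Delta_i(x)}$ and $t-s\in[t_0,\hat t]$; this is plausible on a finite state space (reachability via a spike of $i$ plus continuity of $t\mapsto\pi_t(x,y)$ and compactness), but it is an extra lemma not present in the paper. The paper avoids this entirely by applying Cauchy--Schwarz so that each factor carries $\pi_u^2(\Delta_i(x),y)$, i.e., exactly the quadratic ratio $\pi_u^2(\Delta_i(x),y)/\pi_{t-s}(x,y)$ that Lemma~\ref{frac2} was tailored to bound; the term at $x$ then needs only Lemma~\ref{frac1}. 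So: your approach is correct, arguably more transparent pointwise, but relies on an additional uniform-lower-bound argument that you sketch but do not prove; the paper's version is less obvious at the Cauchy--Schwarz step but more economical, because it consumes Lemma~\ref{frac2} exactly as stated and nothing more.
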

\begin{proof}
 For $\pi_t(x,y)$ being the probability kernel of $\E^x$, we have  \[P_tf(x)=\E^x (f(X_t))=\sum_y \pi_t(x,y) f(y).\] Then we can write
 \begin{align*}\nonumber
\II_1 =& \left( \int_{0}^{t-s}\sum_{y\in D}\left(\pi_u(\Delta_i (x),y)- \pi_u(x,y)\right )   \Ge f(y)du\right)^2 
 \\  \leq& 2(t-s)\int_{0}^{t-s}\underbrace{ \left( \sum_{y\in D}  \pi_u(\Delta_i (x),y) \Ge f(y)\right)^2}_{:=\Psi_1}+\underbrace{\left( \sum_{y\in D}\pi_u(x,y) \Ge f(y)  \right)^2}_{:=\Psi_2}du \end{align*}
  where in the last bound we used Jensen's inequality to pass the square inside the integral. Since for every $z\in D$ the number of sites that can be visited are finite, define  $d=\max_{z\in D}\vert D_z \vert$.  If we use the Cauchy-Schwarz  inequality to bound the square of the sum we obtain
   \begin{align*}  \Psi_1=& \left( \sum_{y\in D} \pi_u(\Delta_i (x),y)\frac{ \Ge f(y)}{f(y)^{\frac{1}{2}}}f(y)^{\frac{1}{2}}\right)^2  \\  \leq &d^2 \left( \sum_{y\in D}  \pi_u^2(\Delta_i (x),y)\frac{ (\Ge f(y))^2}{f(y)}\right)\left( \sum_{y\in D}  \pi^2_u(\Delta_i (x),y)f(y)\right) \\  = &d^2  \left( \sum_{y\in D} \pi_{t-s}(x,y)\frac{ \pi_u^2(\Delta_i (x),y)}{\pi_{t-s}(x,y)}\frac{ (\Ge f(y))^2}{f(y)}\right)\left( \sum_{y\in D}  \pi_{t-s}(x,y)\frac{ \pi_u^2(\Delta_i (x),y)}{\pi_{t-s}(x,y)}f(y)\right) . \end{align*}
Since $u\leq t-s$ and $t-s\geq t_0$, we can now use Lemma  \ref{frac2}  to bound  the two fractions  
 \begin{align*}  \Psi_1\leq C_{1.2}^2d^2  \left( \sum_{y\in D} \pi_{t-s}(x,y) \frac{ (\Ge f(y))^2}{f(y)}\right)\left( \sum_{y\in D}  \pi_{t-s}(x,y) f(y)\right) . \end{align*}
Denote $M:=\sup_{x\in D}\left(\sum_{ i = 1 }^N\phi(x^{i})+1\right)^2$. About the square of  the generator $\Ge(f)$ of $f$,  we can write 
  \begin{align*}(\Ge(f)(y))^2=&(\sum_{ i = 1 }^N\phi(y^{i}))^2\left( \sum_{ i = 1 }^N\frac{ \phi (y^i)}{\sum_{ i = 1 }^N\phi(y^{i})} \left[ f( \Delta_i ( y)  ) - f(y) \right]\right)^2\leq    M\Gamma (f , f)(y)\end{align*} 
  where above we first divided with the   normalisation  constant $\sum_{ i = 1 }^N \phi (\Delta_i ( x)^i)$, since $\phi(x)\geq \delta$, and then used  Jensen's inequality to pass the square inside the sum. 
  Putting everything together, we get 
   \begin{align*}  \Psi_1\leq C_{1.2}^2Md^2 P_{t-s}  \left(\frac{ \Gamma (f , f)(y)}{f(y)}\right)P_{t-s}( f(y)) . \end{align*}
We now compute $\Psi_2$. We will use again Cauchy-Schwarz, but this time  for the measure $P_{t-s}$. For this we will write  
\begin{align*}\Psi_2=&\left( \sum_{y\in D}\pi_u(x,y) \Ge f(y)  \right)^2 \\ =& \left( \sum_{y\in D}\pi_{t-s}(x,y)\frac{ \pi_u(x,y)}{\pi_{t-s}(x,y)}\frac{ \Ge f(y)}{f(y)^\frac{1}{2}}f(y)^\frac{1}{2}  \right)^2  \\   \leq & 
\left( \sum_{y\in D}\pi_{t-s}(x,y)\left(\frac{ \pi_u(x,y)}{\pi_{t-s}(x,y)}\right)^2\frac{( \Ge f(y))^2}{f(y)}   \right)
\left( \sum_{y\in D}\pi_{t-s}(x,y)f(y)   \right).
\end{align*}  We can bound $(\Ge(f ))^2$ as we did in  the computation of $\Psi_1$ and bound  the fraction from Lemma \ref{frac1}, to get 
\begin{align*}\Psi_2   \leq  
C_{1.1}^2 Md^2  P_{t-s} \left( \frac{\Gamma(f,f)(y)}{f(y)}   \right)
P_{t-s}(f(y)).
\end{align*} 

 One should notice that the upper bounds of $\Psi_1$ and $\Psi_2$ do not depend on the integration variable $u$ appearing in $\II_1$. So, if we put everything together we finally  obtain 
  \begin{align*} \II_1  \leq 2 (t-s)^2 MC_1  P_{t-s}\left(\frac{\Gamma (f , f)(y)}{f(y)}\right) P_{t-s}(f(x))  \nonumber\end{align*}where $C_1=d^2 (C^2_{1.1}+C^2_{1.2})$.
\end{proof}
\begin{lemma}\label{bound2}  Assume the PJMP as described in  (\ref{eq:generator0})-(\ref{phi2}). Then, for  $t-s<t_0$, 
 \begin{align*}\II_2 : =& \left(\int_{0}^{t-s}(\E^{\Delta_i ( x) }-\E^x) \Ge f(X_{u}) du\right)^2 \leq  8t_0^2 M      \Gamma(f,f)(\Delta_i(x)) + \\  &+ 4 t_0 ^2 MC_1  P_{t-s}\left(\frac{\Gamma (f , f)(y)}{f(y)}\right) P_{t-s}(f(x)) .
 \end{align*}
\end{lemma}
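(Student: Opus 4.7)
The plan is to follow the same overall structure as the proof of Lemma \ref{bound1}, but since we are now in the regime $t-s < t_0$, Lemma \ref{frac2} is only available when $y \neq \Delta_i(x)$. So the main idea is to isolate the contribution coming from $y = \Delta_i(x)$, handle it by a crude pointwise bound, and apply the Cauchy--Schwarz technique of Lemma \ref{bound1} to the remainder.

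First I would rewrite
\[
\II_2 = \left( \int_{0}^{t-s} \bigl[ a_u(x) + b_u(x)\bigr] du \right)^2
\]
where
\[
a_u(x) := \bigl( \pi_u(\Delta_i(x),\Delta_i(x)) - \pi_u(x,\Delta_i(x)) \bigr)\, \Ge f(\Delta_i(x)),
\]
\[
b_u(x) := \sum_{y \in D,\, y \neq \Delta_i(x)} \bigl( \pi_u(\Delta_i(x),y) - \pi_u(x,y) \bigr)\, \Ge f(y),
\]
and use $(a+b)^2 \leq 2a^2 + 2b^2$ to separate the two contributions.

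For the singular term I would use that $|\pi_u(\Delta_i(x),\Delta_i(x)) - \pi_u(x,\Delta_i(x))| \leq 2$ (both are probabilities), together with $(\Ge f(y))^2 \leq M\, \Gamma(f,f)(y)$ exactly as in Lemma \ref{bound1}, to obtain
\[
2\left(\int_0^{t-s} a_u(x)\, du\right)^2 \leq 2 \cdot 4 (t-s)^2 M\, \Gamma(f,f)(\Delta_i(x)) \leq 8 t_0^2 M\, \Gamma(f,f)(\Delta_i(x)),
\]
using $t-s < t_0$ in the last step. This produces the first term of the stated bound.

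For the remainder $b_u(x)$, I would apply Jensen's inequality to pass the square inside the integral and then split once more via $(\alpha - \beta)^2 \leq 2\alpha^2 + 2\beta^2$, exactly mirroring the derivation of $\Psi_1$ and $\Psi_2$ in the proof of Lemma \ref{bound1}. Crucially, all sums now run only over $y \neq \Delta_i(x)$, so Lemma \ref{frac2} is applicable in the regime $t-s \leq t_0$ and yields the ratio bound $\pi_u^2(\Delta_i(x),y)/\pi_{t-s}(x,y) \leq C_{1.2}^2$, while Lemma \ref{frac1} controls the analogue ratio $\pi_u(x,y)/\pi_{t-s}(x,y)$ by $C_{1.1}$. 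Combining these with the Cauchy--Schwarz step and the bound $(\Ge f)^2 \leq M\, \Gamma(f,f)$, and using $(t-s)^2 \leq t_0^2$, produces
\[
2 \left( \int_0^{t-s} b_u(x)\, du \right)^2 \leq 4 t_0^2 M C_1\, P_{t-s}\!\left( \frac{\Gamma(f,f)(y)}{f(y)} \right) P_{t-s}(f(x)),
\]
with $C_1 = d^2(C_{1.1}^2 + C_{1.2}^2)$, exactly as in Lemma \ref{bound1}. Adding the two contributions gives the stated inequality. The only genuine novelty compared with Lemma \ref{bound1} is the isolation of the diagonal term $y = \Delta_i(x)$, which is precisely the place where Lemma \ref{frac2} fails for $t-s < t_0$; this is why an extra summand $\Gamma(f,f)(\Delta_i(x))$ (which is not averaged by the semigroup) must appear in the small-time bound.
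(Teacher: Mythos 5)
Your proposal is correct and follows essentially the same route as the paper: the key step — isolating the $y=\Delta_i(x)$ contribution (where Lemma \ref{frac2} is unavailable for $t-s<t_0$), bounding it crudely via $(\Ge f)^2\leq M\,\Gamma(f,f)$ and $t-s\leq t_0$, and then treating the remaining sum over $y\neq\Delta_i(x)$ by the same Cauchy--Schwarz argument as in Lemma \ref{bound1} using Lemmas \ref{frac1} and \ref{frac2} — is exactly the paper's decomposition into $\III_1$ and $\III_2$.
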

\begin{proof} We will work as in the previous lemma. Since $t-s< t_0$,   the bounds from Lemma \ref{frac2}   do not hold for all $y\in D$ and so we will break the sum in two parts as shown below.
 \begin{align}\nonumber
\II_2  \leq    &  \nonumber  2\underbrace{\left(\int_0^{t-s}( \pi_u(\Delta_i ( x),\Delta_i ( x)) -\pi_u(x,\Delta_i ( x)) ) \Ge f(\Delta_i ( x))) du\right)^2}_{:=\III_1} + 
\\   &    +2 \underbrace{  \left(\int_0^{t-s}(\sum_{y\in D, y\not=\Delta_i (x)}(\pi_u(\Delta_i (x),y)-\pi_u(x,y))\Ge f(y)) du\right)^2}_{:=\III_2}.  \label{pr2.4+1}
\end{align} 
 We first calculate the first summand  of (\ref{pr2.4+1}). We can write
\begin{align*}
\III_1     \leq     &  \nonumber  4\left(\int_0^{t-s}(\sum_{ j = 1 }^N \phi (\Delta_i ( x)^j)\vert f (\Delta_j ( \Delta_i ( x)))-f (\Delta_i ( x))\vert du\right)^2 \\ \leq  &  \nonumber  4t_0^2( \sum_{ j = 1 }^N \phi (\Delta_i ( x)^j))^2\left(\sum_{ j = 1 }^N \frac{\phi (\Delta_i ( x)^j)}{\sum_{ j = 1 }^N \phi (\Delta_i ( x)^j)}\vert f (\Delta_j ( \Delta_i ( x)))-f (\Delta_i ( x))\vert \right)^2   \end{align*}
where above we divided with the   normalisation constant $\sum_{ j = 1 }^N \phi (\Delta_i ( x)^j)$, since $\phi(x)\geq \delta$.  We can now    apply the Holder inequality on the sum,   so that 
  \begin{align*}\nonumber
\III_1    & \leq      
  4t_0^2M(\sum_{ j = 1 }^N \phi (\Delta_i ( x)^j) (f (\Delta_j ( \Delta_i ( x)))-f (\Delta_i ( x))^2)  \\ &=
  4t_0^2M\Gamma(f,f)(\Delta_i ( x)) .
\end{align*} 
 We now calculate the second summand  of (\ref{pr2.4+1}). For this term  we will work similar to Lemma \ref{bound1}.   
 \begin{align*}\nonumber
 \nonumber \III_2\leq & 2   t_0 \int_{0}^{t-s}\underbrace{ \left( \sum_{y\in D, y\not=\Delta_i (x)} \pi_u(\Delta_i (x),y) \Ge f(y)\right)^2}_{:=\Theta_1}+\underbrace{\left(\sum_{y\in D, y\not=\Delta_i(x)} \pi_u(x,y) \Ge f(y)\right)^2}_{:=\Theta_2}du. \end{align*}
Since when $y\in D,y\neq \Delta_i(x)$ the bounds from  lemmata \ref{frac1} and \ref{frac2}  still hold even when $t\leq t_0$, we can bound $\Theta_1$ and $\Theta_2$ exactly as we did in the previous lemma for $\Psi_1$ and $\Psi_2$ respectively, and so we  eventually  obtain
  \begin{align*} \III_2  \leq  4 t_0 ^2 MC_1  P_{t-s}\left(\frac{\Gamma (f , f)(y)}{f(y)}\right) P_{t-s}(f(x))   .  \nonumber\end{align*}
  Combining the bounds for $\III_1$ and $\III_2$ proves the lemma.  
  \end{proof}
Combining together Lemma \ref{bound1} and Lemma \ref{bound2} we get
 \begin{corollary}\label{techn} For  the PJMP as described in  (\ref{eq:generator0})-(\ref{phi2}), we have
 \begin{align*}\left(\int_0^{t-s}\left(\E^{\Delta_i ( x) }( \Ge f(x_{u}))-  \E^x(\Ge f(x_{u}))\right)du\right)^2 \leq &c    \Gamma(f,f)(\Delta_i(x)) + \\   +c(t-s)& P_{t-s}\left(\frac{\Gamma (f , f)(x)}{f(x)}\right) P_{t-s}(f(x)) 
 \end{align*}
 where $c= 8t_0^2 M $ and $c(t)=4 t_0 ^2 MC_1 + 2 t^2 MC_1 $. 
\end{corollary}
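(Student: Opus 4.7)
The corollary is a straightforward packaging of the two preceding lemmata, so the plan is simply to split into cases according to whether $t-s \geq t_0$ or $t-s < t_0$ and apply the correct lemma in each regime, then verify that the stated constants $c$ and $c(t-s)$ dominate both bounds simultaneously.

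First, in the case $t-s \geq t_0$, Lemma \ref{bound1} directly gives
\[
\left(\int_0^{t-s}\bigl(\E^{\Delta_i(x)}-\E^x\bigr)\Ge f(X_u)\,du\right)^2 \;\leq\; 2(t-s)^2 M C_1 \, P_{t-s}\!\left(\frac{\Gamma(f,f)(x)}{f(x)}\right) P_{t-s}(f(x)).
\]
Since $c(t-s) = 4t_0^2 MC_1 + 2(t-s)^2 MC_1 \geq 2(t-s)^2 MC_1$ and the first term $c\,\Gamma(f,f)(\Delta_i(x))$ on the right hand side of the corollary is nonnegative, the claimed inequality follows in this regime.

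Next, in the case $t-s < t_0$, Lemma \ref{bound2} gives
\[
\left(\int_0^{t-s}\bigl(\E^{\Delta_i(x)}-\E^x\bigr)\Ge f(X_u)\,du\right)^2 \;\leq\; 8t_0^2 M\,\Gamma(f,f)(\Delta_i(x)) + 4t_0^2 MC_1\, P_{t-s}\!\left(\frac{\Gamma(f,f)(x)}{f(x)}\right) P_{t-s}(f(x)).
\]
Here the leading constant in the first term matches $c = 8t_0^2 M$ exactly, and for the second term one uses $c(t-s) \geq 4t_0^2 MC_1$, which holds by inspection of the definition of $c(t)$. This yields the corollary in the second regime.

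Since the right hand side of the stated inequality is the same expression in both regimes and dominates the regime-specific bound in each case, the corollary is established. There is no real obstacle here: all the analytic work (the Cauchy–Schwarz splitting, the use of Lemmata \ref{frac1} and \ref{frac2} to control the ratios $\pi_u/\pi_{t-s}$, and the separate treatment of the ``self-term'' $y=\Delta_i(x)$ when $t-s<t_0$) has already been carried out in Lemmata \ref{bound1} and \ref{bound2}; the corollary only needs the elementary observation that the two regime-dependent upper bounds are simultaneously bounded by the single expression $c\,\Gamma(f,f)(\Delta_i(x)) + c(t-s)\,P_{t-s}(\Gamma(f,f)/f)\,P_{t-s}(f)$.
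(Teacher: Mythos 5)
Your proof is correct and takes exactly the approach the paper intends: the paper simply states the corollary with the phrase ``Combining together Lemma \ref{bound1} and Lemma \ref{bound2} we get,'' and the case split on $t-s \gtrless t_0$ with the observation that $c$ and $c(t-s)$ dominate both regime-specific bounds is the implicit argument. Nothing to add.
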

Next we show an   additive property for the semigroup,  when the semigroup is on the denominator. 
\begin{lemma}\label{neolemma}For  the PJMP as described in  (\ref{eq:generator0})-(\ref{phi2}), we have
\begin{align*}P_s \left(\frac{g(x)}{P_{t-s}f(x)}\right)\leq d(t-s)P_{t}\left(\frac{g(x)}{f(x)}\right) + d(t-s)\sum_{ j = 1 }^NP_{t}\left(\frac{g(\Delta_j(x))}{ f(x)}\right)\end{align*}
where $d(t)=2+8t^2M^2C_{1.1} $.\end{lemma}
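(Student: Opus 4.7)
The strategy is first to obtain a \emph{pointwise} estimate for $\frac{1}{P_{t-s}f(y)}$ in terms of $\frac{1}{f(y)}$ and the one-jump neighbours $\frac{1}{f(\Delta_j(y))}$, then to multiply by $g$, apply $P_s$, and finally to convert every $P_s$-average into a $P_t$-average using Lemma \ref{frac1}.

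For the pointwise bound I would start from the Duhamel-type identity
\[
P_{t-s}f(y) = f(y) + \int_0^{t-s} P_u \Ge f(y)\, du,
\]
and split according to whether $P_{t-s}f(y) \ge f(y)/2$ or not. On the ``good'' set one has $\frac{1}{P_{t-s}f(y)} \le \frac{2}{f(y)}$, which accounts for the leading constant $2$ in $d(t-s)$. On the complementary ``bad'' set $|f(y)-P_{t-s}f(y)| > f(y)/2$, hence $\frac{1}{f(y)} < \frac{2|P_{t-s}f(y)-f(y)|}{f(y)^2}$; combining this with the uniform lower bound $P_{t-s}f(y) \ge p_{t-s}(y)f(y)$, Cauchy--Schwarz on the time integral, and the elementary inequality $(\Ge h)^2 \le M\,\Gamma(h,h)$ already exploited in Lemma \ref{bound1}, produces a correction of order $(t-s)^2$ and $M^2$ times a time-averaged $\Gamma(f,f)(y)$. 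This is the origin of the $8(t-s)^2 M^2 C_{1.1}$ summand in $d(t-s)$.

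Multiplying the pointwise bound by $g(y)$ and averaging with $P_s$, the right-hand side becomes $2\,P_s(g/f)(x)$ plus a correction of the form $P_s\!\left(g\,\Gamma(f,f)/f^2\right)(x)$. Expanding $\Gamma(f,f)(y) = \tfrac12 \sum_j \phi(y^j)\bigl(f(\Delta_j(y))-f(y)\bigr)^2$ and using $(a-b)^2 \le 2a^2+2b^2$ splits this correction into terms of the shape $g(y)/f(y)$ and $g(y)\,f(\Delta_j(y))^2/f(y)^2$; a Cauchy--Schwarz step together with the change of dummy variable $y \mapsto \Delta_j(y)$ inside the $P_s$-average then converts these into the two families of ratios $g(y)/f(y)$ and $g(\Delta_j(y))/f(y)$ demanded by the statement. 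Lemma \ref{frac1}, in the form $\pi_s(x,y) \le C_{1.1}\,\pi_t(x,y)$, finally upgrades every $P_s$-average into $C_{1.1}$ times the corresponding $P_t$-average, producing the stated inequality with $d(t-s) = 2 + 8(t-s)^2 M^2 C_{1.1}$.

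The principal obstacle is the ``bad'' set, where the naive estimate $\frac{1}{P_{t-s}f} \le \frac{2}{f}$ fails and one has to reinvest the smallness of $P_{t-s}f(y)-f(y)$ through $\Ge f$, and hence through $\Gamma(f,f)$, in order to recover a usable bound. A secondary subtlety is producing the asymmetric denominator $f(x)$, rather than $f(\Delta_j(x))$, in the second sum on the right-hand side: this forces one to keep the factor $f(y)^{-2}$ intact when expanding $\Gamma(f,f)$, so that the $f(\Delta_j(y))^2$ generated by the expansion is absorbed into the \emph{numerator} through the change-of-variable step above.
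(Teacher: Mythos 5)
The paper proves this lemma by applying Dynkin's formula to $g^{1/2}$ (not to $f$): it writes $g(x) = \bigl(P_{t-s}g^{1/2}(x) - \int_0^{t-s}\E^x(\Ge g^{1/2}(x_u))\,du\bigr)^2$, splits by $(a+b)^2\le 2a^2+2b^2$, bounds the first square by Cauchy--Schwarz so that $\frac{(P_{t-s}g^{1/2}(x))^2}{P_{t-s}f(x)}\le P_{t-s}\bigl(\frac{g}{f}\bigr)(x)$, and bounds the second by Jensen, Cauchy--Schwarz with the weights $\pi_u/\pi_{t-s}$, Lemma~\ref{frac1}, and the elementary inequality $(\Ge g^{1/2}(y))^2 \le 2M^2\sum_j g(\Delta_j(y)) + 2M^2 g(y)$. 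The semigroup identity $P_sP_{t-s}=P_t$ then gives the $P_t$-averages. Your route is genuinely different — you expand $P_{t-s}f$ (the \emph{denominator}) via Duhamel and split into good/bad sets — and it has a gap that I do not think can be closed as written.

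The key difficulty is how $g(\Delta_j(x))$ is supposed to appear in the numerator. In the paper it arises because the Dynkin expansion is applied to $g^{1/2}$, so that squaring $\Ge g^{1/2}(y)=\sum_j\phi(y^j)\bigl(g^{1/2}(\Delta_j(y))-g^{1/2}(y)\bigr)$ automatically produces the terms $g(\Delta_j(y))$ and $g(y)$, which sit over the single denominator $f(y)$. In your plan, expanding $\Gamma(f,f)(y)$ in $P_s\bigl(g\,\Gamma(f,f)/f^2\bigr)$ yields terms of the form $\phi(y^j)\,g(y)\,f(\Delta_j(y))^2/f(y)^2$, and you propose to turn these into $g(\Delta_j(y))/f(y)$ by ``Cauchy--Schwarz together with a change of dummy variable $y\mapsto\Delta_j(y)$''. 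That step does not work: $\Delta_j$ is not injective (it resets the $j$-th coordinate to $0$), so it cannot be used as a change of variable in the sum over $y$; and even formally, re-indexing would replace \emph{every} instance of $y$ by $\Delta_j(y)$ at once — it cannot move the argument of $g$ to $\Delta_j(y)$ while leaving the argument of $f$ at $y$. The factor $g$ is glued to the un-jumped site by construction when you expand the denominator, and no elementary manipulation detaches it. This is precisely why the paper expands $g^{1/2}$, not $f$.

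A secondary mismatch: the constant $2$ in $d(t-s)$ cannot come from ``good set + Lemma~\ref{frac1}'' as you propose, because converting a $P_s$-average into a $P_t$-average via $\pi_s\le C_{1.1}\pi_t$ introduces an extra factor $C_{1.1}$; the paper avoids this on the leading term by writing $P_s\bigl(P_{t-s}(g/f)\bigr)=P_t(g/f)$ exactly, using $P_sP_{t-s}=P_t$. So even if the main gap were repaired, your constants would not reduce to the stated ones without invoking the semigroup identity.
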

\begin{proof}By Dynkin's formula
\[P_{t }g^{\frac{1}{2}}(x)=\E^x g^{\frac{1}{2}}(x_t)=g^{\frac{1}{2}}(x)+\int_0^{t}\E^x(\Ge g^{\frac{1}{2}}(x_u))du,\]
 we have
\begin{align}\label{nnn1}\frac{g(x)}{P_{t-s}f(x)}\leq 2 \frac{(\E^x g^{\frac{1}{2}}(x_{t-s}))^2}{P_{t-s}f(x)}+   2\frac{\left(\int_0^{t-s}\E^x(\Ge g^{\frac{1}{2}}(x_u))du\right)^2}{P_{t-s}f(x)}.\end{align}
 For the first term on the right hand side of (\ref{nnn1}), if we use the Cauchy-Schwarz inequality  we have
 \[\frac{(\E^x g^{\frac{1}{2}}(x_{t-s}))^2}{P_{t-s}f(x)}=\frac{(P_{t-s}g^{\frac{1}{2}}(x))^2}{P_{t-s}f(x)}\leq\frac{P_{t-s}\left(\frac{g(x)}{f(x)}\right)(P_{t-s}f(x))}{P_{t-s}f(x)}=P_{t-s}\left(\frac{g(x)}{f(x)}\right).\]
From the semigroup property  $P_sP_{t-s}=P_t,$ we get 
\begin{align}\label{neq0}P_s \left(\frac{(\E^x g^{\frac{1}{2}}(x_{t-s}))^2}{P_{t-s}f(x)}\right)\leq P_{t}\left(\frac{g(x)}{f(x)}\right).\end{align}
We will now compute the second  term in the right hand side of (\ref{nnn1}). From Jensen's inequality we have
\begin{align} \label{vnew2.5}\left(\int_0^{t-s}\E^x(\Ge g^{\frac{1}{2}}(x_u))du\right)^2\leq (t-s)  \int_0^{t-s}\left(\E^x\Ge( g^{\frac{1}{2}}(x_u))\right)^2du .
\end{align}
If we write
\begin{align*}\left(\E^x\Ge( g^{\frac{1}{2}}(x_u))\right)^2&= \left(\sum_{y\in D}\pi_u(x,y)\Ge( g^{\frac{1}{2}}(y))\right)^2\\ &=\left(\sum_{y\in D}\pi_{t-s}(x,y)\frac{\pi_u(x,y)}{\pi_{t-s}(x,y)}\frac{\Ge(g^{\frac{1}{2}}(y))}{f^{\frac{1}{2}}(y)}f^{\frac{1}{2}}(y)\right)^2,
\end{align*}from Cauchy-Schwarz  inequality and  Lemma \ref{frac1}  we bound
\begin{align*}\left(\E^x\Ge( g^{\frac{1}{2}}(x_u))\right)^2&
\leq\left(\sum_{y\in D}\pi_{t-s}(x,y)f(y)\right)\left(\sum_{y\in D}\pi_{t-s}(x,y)\frac{\pi^{2}_u(x,y)}{\pi^{2}_{t-s}(x,y)}\frac{(\Ge( g^{\frac{1}{2}}(y)))^{2}}{f(y)}\right)
\\ & \leq C^2_{1.1} \left(P_{t-s}f(x)\right)\left(\sum_{y\in D}\pi_{t-s}(x,y) \frac{(\Ge( g^{\frac{1}{2}}(y)))^{2}}{  f(y)}\right).
\end{align*}
  Furthermore,   if we   use once more   Cauchy-Schwarz  inequality and the bound $\sum_{ j = 1 }^N\phi(y^j)\leq M$, we have
\begin{align*}(\Ge(g^{\frac{1}{2}}(y)))^{2}&=\left(\sum_{ j = 1 }^N\phi(y^j)(g^{\frac{1}{2}}(\Delta_j(y))-g^{\frac{1}{2}} (y))\right)^2 \\  &\leq 2M^2\sum_{ j = 1 }^Ng  (\Delta_j(y))+2M^2g(y).\end{align*}   So we can bound  
\begin{align*}\left(\E^x\Ge( g^{\frac{1}{2}}(x_u))\right)^2
\leq &2 M^2 C^2_{1.1} \left(P_{t-s}  f(x)\right)\sum_{ j = 1 }^N P_{t-s}\left(\frac{g(\Delta_j(x))}{ f(x)}\right) +  \\  & +2M^{2}C^2_{1.1}\left(P_{t-s}f(x)\right)P_{t-s}\left(\frac{  g(x)}{ f(x)}\right) .
\end{align*}
From this and (\ref{vnew2.5}), we obtain the following bound for the second term on the right of (\ref{nnn1}) 
\begin{align*}  P_{s}\left(\frac{\left(\int_0^{t-s}\E^x(\Ge g^{\frac{1}{2}}(x_u))du\right)^2}{P_{t-s}f(x)}\right)\leq & (t-s)^2 2 M^2C^2_{1.1}  \sum_{ j = 1 }^N  P_{t}\left(\frac{g(\Delta_j(x))}{ f(x)}\right)\\  & +2(t-s)^2M ^2C^2_{1.1}P_{t}\left(\frac{ g(x)}{ f(x)}\right)   \end{align*}
 where once more we used that $P_sP_{t-s}=P_t$. From the last bound together with (\ref{neq0}) and (\ref{nnn1}) we finally get
\begin{align*}P_s \left(\frac{g(x)}{P_{t-s}f(x)}\right)\leq&(2+4(t-s)^2M^{2}C^2_{1.1})P_{t}\left(\frac{g(x)}{f(x)}\right) \\ &+  (t-s)^2 4 M^2 C^2_{1.1}  \sum_{ j = 1 }^N\  P_{t}\left(\frac{g(\Delta_j(x))}{ f(x)}\right).\end{align*} 
\end{proof}Before we present the proof of the  Theorem \ref{thmCon},  we show a sweeping out relationship for the carr\'e du champ.
\begin{lemma}\label{lastLem}For  the PJMP as described in  (\ref{eq:generator0})-(\ref{phi2}), we have \begin{align*}
\Gamma  (P_{t-s}f,P_{t-s}f)(x)  \leq & 
2\Gamma(f,f)( x) + 2c \sum_{i=1}^N \phi(x^{i})  \Gamma(f,f)(\Delta_i(x))+ \\  & +2Mc(t-s)  P_{t-s}\left(\frac{\Gamma (f , f)(x)}{f(x)}\right) P_{t-s}(f(x)),
 \end{align*}
 for   $c$ and $c(t)$ as defined in Corollary \ref{techn}. 
\end{lemma}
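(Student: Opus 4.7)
The plan is to reduce the target sweeping-out estimate to Corollary~\ref{techn} via Dynkin's formula applied to the increments of $P_{t-s}f$.

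First I would expand the left hand side using the definition of the carré du champ for the PJMP,
\[
\Gamma(P_{t-s}f,P_{t-s}f)(x)=\frac12\sum_{i=1}^N\phi(x^{i})\bigl[P_{t-s}f(\Delta_i(x))-P_{t-s}f(x)\bigr]^{2},
\]
so the task is to control each increment $P_{t-s}f(\Delta_i(x))-P_{t-s}f(x)$. Applying Dynkin's formula at the two points $x$ and $\Delta_i(x)$, I write
\[
P_{t-s}f(\Delta_i(x))-P_{t-s}f(x)=\bigl[f(\Delta_i(x))-f(x)\bigr]+\int_{0}^{t-s}\bigl(\E^{\Delta_i(x)}-\E^{x}\bigr)\Ge f(X_u)\,du .
\]
Then I would square using the elementary inequality $(a+b)^{2}\leq 2a^{2}+2b^{2}$, multiply by $\tfrac12\phi(x^{i})$, and sum over $i$.

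The two resulting pieces are handled separately. The purely algebraic piece yields exactly $2\Gamma(f,f)(x)$, because $\sum_{i}\phi(x^{i})[f(\Delta_i(x))-f(x)]^{2}=2\Gamma(f,f)(x)$ by definition of $\Gamma$. The time-integral piece is precisely what Corollary~\ref{techn} estimates: for each $i$,
\[
\Bigl(\int_{0}^{t-s}(\E^{\Delta_i(x)}-\E^{x})\Ge f(X_u)\,du\Bigr)^{2}\leq c\,\Gamma(f,f)(\Delta_i(x))+c(t-s)\,P_{t-s}\!\Bigl(\tfrac{\Gamma(f,f)(x)}{f(x)}\Bigr)P_{t-s}(f(x)).
\]
Summing these bounds weighted by $\phi(x^{i})$ delivers the middle term $2c\sum_{i}\phi(x^{i})\Gamma(f,f)(\Delta_i(x))$ directly, while the last term picks up the factor $\sum_{i}\phi(x^{i})$, which is bounded by $M$ thanks to the definition $M=\sup_{x\in D}(\sum_{i}\phi(x^{i})+1)^{2}\geq\sum_{i}\phi(x^{i})$; this produces the $2Mc(t-s)P_{t-s}(\Gamma/f)P_{t-s}(f)$ contribution, up to the universal factor of $2$ coming from the square split.

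There is no real obstacle here: the entire analytic difficulty has already been concentrated in Corollary~\ref{techn} (and the underlying Lemmas~\ref{bound1}, \ref{bound2}). The only care needed is bookkeeping — keeping the factors of $2$ from the square split, recognising the algebraic $2\Gamma(f,f)(x)$, and using $\sum_{i}\phi(x^{i})\leq M$ uniformly in $x\in D$. The proof is essentially a one-line application of Dynkin plus one line of the corollary, and no further probabilistic estimate is required.
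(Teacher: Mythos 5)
Your proposal is correct and follows essentially the same route as the paper: expand $\Gamma(P_{t-s}f,P_{t-s}f)$ from the definition, apply Dynkin's formula to each increment $P_{t-s}f(\Delta_i(x))-P_{t-s}f(x)$, split the square with $(a+b)^2\leq 2a^2+2b^2$, invoke Corollary~\ref{techn} on the integral term, and finish with $\sum_i\phi(x^i)\leq M$. One small bookkeeping remark: keeping the factor $\tfrac12$ in $\Gamma$ and the factor $2$ from the square split actually yields the sharper constants $c$ and $Mc(t-s)$ in the second and third terms (the paper's stated $2c$ and $2Mc(t-s)$ are just slightly loose), but since this is an upper bound that only strengthens the conclusion.
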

\begin{proof}
 From the definition of the carr\'e du champ
  \begin{align} 
\Gamma  (P_{t-s}f,P_{t-s}f)(x) = \sum_{ i = 1 }^N \phi (x^i) ( \E^{\Delta_i ( x) }  f(x_{t-s})-\E^{x} f(x_{t-s}))^2. 
\label{eq2} \end{align}
If we use the  Dynkin's formula 
$$\E^x f(x_t)=f(x)+\int_0^{t}\E^x(\Ge f(x_u))du$$we get
  \begin{align*}\nonumber
\left(\E^{\Delta_i ( x)}f(x_{t-s})-\E^{x}f(x_{t-s})\right)^2   \leq &2  \left(f(\Delta_i ( x) )- f(x)\right)^2 +\\  +2& \left(\int_0^{{t-s}}\left(\E^{\Delta_i ( x) }( \Ge (f(x_{u}))-  \E^x(\Ge (   f(x_{u}))\right)du\right)^2. 
 \end{align*}
  In order to bound the second term above we will use the bound shown in  Corollary  \ref{techn}  
     \begin{align*}\nonumber
\left(\E^{\Delta_i ( x)}f(x_{t-s})-\E^{x}f(x_{t-s})\right)^2   \leq & 2   \left(f(\Delta_i ( x) )- f(x)\right)^2 + 2c    \Gamma(f,f)(\Delta_i(x)) + \\  & +2c(t-s) P_{t-s}\left(\frac{\Gamma (f , f)(x)}{f(x)}\right) P_{t-s}(f(x)).
 \end{align*}
where $c$ and $c(t)$ as in Corollary \ref{techn}. This  together with   (\ref{eq2}) gives
  \begin{align*}
\Gamma  (P_{t-s}g,P_{t-s}g)(x)  \leq & 
2\Gamma(f,f)( x) + 2c \sum_{i=1}^N \phi(x^{i})  \Gamma(f,f)(\Delta_i(x))+ \\  & +2Mc(t-s)  P_{t-s}\left(\frac{\Gamma (f , f)(x)}{f(x)}\right) P_{t-s}(f(x)).
 \end{align*}

\end{proof}

We have obtained all the technical results that we need  to prove Theorem \ref{thmCon}. 
\subsection{proof of Theorem \ref{thmCon}:}\label{mainproof}

˜

 We will work similar to \cite{A-L}.  Denote $P_tf(x)=\E^xf(x_t)$. If we define $\phi(s)=P_s(P_{t-s}f \log P_{t-s}f))$ then, for every $f\geq 0$
  \begin{align*}
\phi '(s)=\frac{1}{2}P_s \left( \Ge (P_{t-s} f \log P_{t-s} f  )-(1+\log P_{t-s}f)\Ge (P_{t-s}f) \right)  \end{align*}
 where above we used that for a semigroup and its associated infinitesimal generator  the following well know relationships: $\frac{d}{ds}P_s= \Ge P_s =P_s \Ge$ (see for example \cite{G-Z}).

Since $\log a- \log b\leq \frac{(a-b)}{b}$ we have 
\[\Ge (f \log f)-(1+\log f)\Ge f \leq \frac{2 \Gamma(f,f)}{f}.\] Using this we get
    \begin{align*}
\phi '(s)\leq P_s \left( \frac{1}{P_{t-s}f}\Gamma(P_{t-s}f,P_{t-s}f) \right). \end{align*} 
 If we use  Lemma \ref{lastLem} to bound the carr\'e du champ of the semigroup we get 
  \begin{align*}
\phi '(s)\leq &  2 P_s \left( \frac{\Gamma(f,f)( x) }{P_{t-s}f(x)}   \right)+2c  M\sum_{i=1}^N  P_s \left( \frac{\Gamma(f,f)(\Delta_i(x))}{P_{t-s}f(x)}   \right)  +
\\  &  +2Mc(t) P_s  P_{t-s}\left(\frac{\Gamma (f , f)(x)}{f(x)}\right), \end{align*}  
since $c(t-s)\leq c(t)$.  We can use  Lemma \ref{neolemma} to bound the first and second term as well as the  the semigroup property  $P_s P_{t-s}=P_t$. We will then get
 \begin{align*}
\phi '(s)\leq &  \alpha'(t)P_{t}\left(\frac{\Gamma(f,f)(x)}{f(x)}\right) +  \beta'(t)\sum_{ j = 1 }^N P_{t}\left(\frac{ \Gamma(f,f)(\Delta_j(x))}{ f(x)}\right)+  \\  &+ \gamma'(t) \sum_{i=1}^N    \sum_{ j = 1 }^N P_{t}\left(\frac{ \Gamma(f,f)(\Delta_i(\Delta_j(x)))}{ f(x)}\right)
 \end{align*}  
 for $\alpha'(t)=  2Mc(t)+2d(t)$, $\beta'(t)=2(cM+1)d(t)  $ and $\gamma(t)=2cMd(t)$.
If we integrate, we will finally obtain 
 \begin{align*} \phi(t)-\phi(0)= &P_t( f\log f)-P_t f \log P_t f  \\  \leq & \alpha(t)P_{t}\left(\frac{\Gamma(f,f)(x)}{f(x)}\right) +  \beta(t)\sum_{ j = 1 }^N\ P_{t}\left(\frac{ \Gamma(f,f)(\Delta_j(x))}{ f(x)}\right)+  \\  &+ \gamma(t) \sum_{i=1}^N  \sum_{ j = 1 }^N P_{t}\left(\frac{ \Gamma(f,f)(\Delta_i(\Delta_j(x)))}{ f(x)}\right)
 \end{align*}  
where $\alpha(t)=t\alpha'(t), \beta(t)=t \beta'(t)$ and $\gamma(t)=t\gamma'(t)$. Then the proof is completed for $\delta(t)=\max\{\alpha(t),\beta(t),\gamma(t)\}$.
 
 \subsection{proof of Corollary \ref{corP}:}\label{proofcor}

˜

In order to prove the corollary, it is sufficient, to bound  the  carr\'e du champ operators   $\Gamma (f_k,f_k)(\Delta_i(x))$ and   $\Gamma (f_k,f_k)(\Delta_i(\Delta_j(x)))$,  by $\Gamma (f_k,f_k)(x)$, for all $k=1,...,N$. 

A)  Consider the case where $f_k$ is   decreasing  and convex. Then by the convexity  we have  $f_k(\frac{z+y}{2})\leq \frac{f_k(z)+f_k(y)}{2}$ for every $x,y$ on the domain of $f_k$. 

A1) At first assume that  $k\neq i$.   We compute
  \begin{align}\label{2.4_0}\Gamma (f_k,f_k)(\Delta_i(x))= & \frac{1}{2} \sum_{ j = 1 }^N \phi (\Delta_i ( x)^j) (f_k (\Delta_j ( \Delta_i ( x)))-f_k (\Delta_i ( x))^2  \\     \leq   &\frac{M}{\delta} \frac{1}{2} \sum_{ j = 1, j\neq k }^N \phi (x^j)  \left(f_k (x+2w)-f_k (x+w)\right)^{2}\\  &+\frac{M}{\delta} \frac{1}{2}  \phi (x^k)  \left(f_k (0)-f_k (x+w)\right)^{2}\nonumber
 \end{align}
 where above  we used (\ref{phi2}) and  $\phi \leq M$ and that   the weights are all equal $w_{ji}=w$.

To bound the first term on the  right hand side of (\ref{2.4_0}) we choose $z=x$ and $y=2x+w$. We then have 
 \[f_k(x+w)\leq \frac{f_k(x)+f_k( x+2w)}{2}\]\[\Updownarrow\]\[ \left(f_k(x+w)-  f_k( x+2w)\right)^{2}\leq \left(f_k(x)  -f_k(x+w) \right)^{2}\]
 since $f_k$ is decreasing and the weight $w$ is positive. This implies 
  \begin{align}\label{2.4_1} \frac{M}{\delta} \frac{1}{2} \sum_{ j = 1, j\neq k }^N \phi (x^j)  \left(f_k (x+2w)-f_k (x+w)\right)^{2}\leq\frac{M}{\delta} \Gamma(f_k,f_k)(x).\end{align}
 In order to bound the second term on the right hand side of (\ref{2.4_0}), we   choose $z=0$ and $y=x+w$, for $x>0$. Then, we have
 \begin{align}\label{x>w1}  f_k(\frac{x+w}{2})\leq \frac{f_k(x+w)+f_k(0)}{2} .  \end{align}
 But in the domain of the invariant measure $D$, the membrane potential of a neuron can be either  bigger and equal to $w$, or equal to zero. This is because when a neuron $k$ spikes, its membrane potential is set to $0$. Then, it can only leave from zero when another neuron spikes, in which case it goes to $w$. In this way, we have that either $x^{k}=0$ or $x^{k}\geq w$. As a result,   $x>0$, implies that 
 \[x\geq w \iff  \frac{x+w}{2}\leq x \iff  f_k(x)\leq f_k(\frac{x+w}{2}) \]
 since $f_k$ is decreasing. Then  (\ref{x>w1}) becomes
  \[  f_k(x)\leq \frac{f_k(x+w)+f_k(0)}{2}   \iff  f_k(0)-f_k(x+w)\leq 2( f_k(0)-  f_k(x))  \]
which, since $f_k$ is decreasing, readily implies
   \begin{align}\label{fk1}\frac{M}{\delta} \frac{1}{2}  \phi (x^k)  \left(f_k (0)-f_k (x+w)\right)^{2}\leq \frac{2M}{\delta} \frac{1}{2}  \phi (x^k) \left(   f_k(0)-  f_k(x)\right)^{2}  \end{align}
   for every $x>0$. In the   case where $x=0$, we observe that, for any $j\neq k$
   \[f_k(0)-f_k(w)=f_k(0)-f_k(\Delta_j( 0)),\]
   and so
   \[\frac{M}{\delta} \frac{1}{2}  \phi (0)  \left(f_k (0)-f_k (w)\right)^{2}\leq \frac{M}{\delta} \Gamma(f_k,f_k)(0).\]
   Combining this last bound together with (\ref{fk1}), we obtain 
  \begin{align}\label{fk2}\frac{M}{\delta} \frac{1}{2}  \phi (x^k)  \left(f_k (0)-f_k (x+w)\right)^{2}\leq  \frac{2M}{\delta} \Gamma(f_k,f_k)(x)\end{align}
   for every $x\in D$. Finally, if we use   (\ref{fk2}) to bound (\ref{2.4_0}), we get
   \begin{align}\label{VLE1} \Gamma (f_k,f_k)(\Delta_i(x)) \leq  \frac{2M}{\delta} \Gamma(f_k,f_k)(x).\end{align}
A2)    Now assume $k=i$. Since, for every $x>0$,  $\Delta_k(x^{k})=0$, we can compute
  \begin{align}\label{2.4_20}\Gamma (f_k,f_k)(\Delta_k(x))=  \frac{1}{2} \sum_{ j = 1 }^N \phi (0) (f_k (\Delta_j ( 0))-f_k (0))^2      \leq    \frac{N-1}{2}   \phi (0)  \left(f_k (w)-f_k (0)\right)^{2}.
 \end{align}
 But since $f_k$ is a decreasing function, one gets that for every $x>0$, $f_k(x)\leq f_k(w)$, since every $x\in D$, that is $x>0$, satisfies $x\geq w$. As a result we have
  \begin{align*}\left(f_k (0)-f_k (w)\right)^{2}\leq &\left(f_k (0)-f_k (x)\right)^{2}\\  =&\left(f_k (\Delta_k(x)-f_k (x)\right)^{2}\\  \leq &\frac{1}{\delta}\phi(x^{k})\left(f_k (\Delta_k(x)-f_k (x)\right)^{2}\\ \leq&  \frac{2}{\delta}\Gamma(f_k,f_k)(x).\end{align*}
Putting this in (\ref{2.4_20}) we get
\[\Gamma (f_k,f_k)(\Delta_k(x))\leq    \frac{(N-1) \phi (0)}{\delta}   \Gamma(f_k,f_k)(x), \] 
 for every $x>0$. In the case where $x=0$, the same result follows trivially. 
 
B)  Now, consider the case where $f_k$  is increasing and concave. Since this case follows similar to the decreasing and convex case studied in (A), we just highlight the differences. By the concavity we have  $f_k(\frac{z+y}{2})\geq \frac{f_k(z)+f_k(y)}{2}$ for every $x,y$ on the domain of $f_k$. To bound the first term on the  right hand side of (\ref{2.4_0}) if we  choose  $z=x$ and $y=2x+w$, since $f_k$ is increasing, we obtain
$ \left(  f_k( x+2w)-f_k(x+w)\right)^{2}\leq \left(f_k(x+w) -f_k(x)  \right)^{2},$ which 
 implies  (\ref{2.4_1}).
 
  To bound the second term on the right hand side of (\ref{2.4_0})   choose $z=0$ and $y=x+w$, for $x>0$. Then, 
 \[f_k(x)\geq f_k(\frac{x+w}{2})\geq \frac{f_k(x+w)+f_k(0)}{2} ,  \]
 which leads to 
  \[   f_k(x+w)-f_k(0)\leq 2(  f_k(x)- f_k(0))  \]
and so   (\ref{fk1}) 
   for every $x>0$ is again satisfied. The   case  $x=0$, follows identically to (A) and so (\ref{fk2}) is verified    for every $x\in D$, and so (\ref{VLE1}) follows for $k\neq i$.
       Now assume $k=i$. Since  for every $x>0$,  $\Delta_k(x^{k})=0$, we can compute
  \begin{align}\label{2.4_2}\Gamma (f_k,f_k)(\Delta_k(x))&=  \frac{1}{2} \sum_{ j = 1 }^N \phi (0) (f_k (\Delta_j ( 0))-f_k (0))^2  \\     & \leq    \frac{N-1}{2}   \phi (0)  \left(f_k (w)-f_k (0)\right)^{2}.\nonumber
 \end{align}
 But since $f_k$ is a decreasing function, one gets that for every $x>0$, $f_k(x)\leq f_k(w)$, since every strictly positive $x\in D$  satisfies $x\geq w$. As a result we have
  \begin{align*}\left(f_k (0)-f_k (w)\right)^{2}\leq &\left(f_k (0)-f_k (x)\right)^{2}\\ =&\left(f_k (\Delta_k(x)-f_k (x)\right)^{2}\\  \leq &\frac{1}{\delta}\phi(x^{k})\left(f_k (\Delta_k(x)-f_k (x)\right)^{2}\\  \leq & \frac{2}{\delta}\Gamma(f_k,f_k)(x).\end{align*}
Putting this in (\ref{2.4_2}) we get
\[\Gamma (f_k,f_k)(\Delta_k(x))\leq    \frac{(N-1) \phi (0)}{\delta}   \Gamma(f_k,f_k)(x), \] 
 for every $x>0$. In the case where $x=0$, the same result as in (A) follows trivially.

   Now assume  $k=i\neq j$.   Since, for every $x>0$,  $\Delta_k(x^{k})=0$, as before we have
  \begin{align}\label{2.4_2a}\Gamma (f_k,f_k)(\Delta_k(x)    \leq    \frac{N-1}{2}   \phi (0)  \left(f_k (w)-f_k (0)\right)^{2}.
 \end{align}
 But since $f_k$ is an increasing function, one gets that for every $x>0$, $f_k(x)\geq f_k(w)$, and so
  \begin{align*}\left(f_k (w)-f_k (0)\right)^{2}\leq &\left(f_k (x)-f_k (0)\right)^{2}=\left(f_k (x)-f_k (\Delta_k(x))\right)^{2}  \leq   \frac{2}{\delta}\Gamma(f_k,f_k)(x).\end{align*}
Putting this in (\ref{2.4_2}) we get
\[\Gamma (f_k,f_k)(\Delta_k(x))\leq    \frac{(N-1) \phi (0)}{\delta}   \Gamma(f_k,f_k)(x), \] 
 for every $x>0$. In the case where $x=0$, the same result follows trivially.  
 
 Combining together (A) and (B), we conclude that there exists a $R>0$ such that
 \begin{align*}\Gamma (f_k,f_k)(\Delta_i(x))\leq   R \Gamma(f_k,f_k)(x). \end{align*}
 Then, by induction we further get
\[\Gamma (f_k,f_k)(\Delta_i(\Delta_j(x)))\leq   R \Gamma(f_k,f_k)(\Delta_j(x)) \leq R^{2}  \Gamma(f_k,f_k)(x),\]
and the corollary  is proven.
\qed

\section{concentration and empirical approximation.}\label{secCon}
Concentration inequalities
\[\mu \left(\vert f- \mu (f) \vert >r \right)\leq c e^{-\lambda  r^p},   \   \   p\geq 1,\]
for a  probability measure $\mu$, have  been associated by  Talagrand  (see \cite{Ta1}  and \cite{Ta2})
to Poincar\'e and   log-Sobolev   inequalities (see also \cite{Bo-Le}, \cite{Led} and \cite{Led0}). In the current section we prove the typical concentration inequality of Proposition   \ref{Pcon} and the concentration empirical approximation of  Theorem  \ref{empthm}.  We begin with the proof of Theorem   \ref{empthm}.

\subsection{proof of the concentration empirical approximation of  Theorem  \ref{empthm}.}\label{proofEmpCon}

The concentration empirical approximations, as in  Theorem  \ref{empthm}, demonstrate how sharp are the approximations of the process as the number  of observables goes to infinity (see \cite{Ma} and \cite{B-G-V} for strong concentration  results). As briefly mentioned in the introduction,  these do not follow from the concentration inequalities of Proposition \ref{Pcon}, since, the empirical concentration inequalities involve non bounded quantities, as is the       sum $\sum_{k=1}^{n}f(X^{i}_{t_k})$ for an increasing to infinity $n$.  If one tries to  obtain Theorem \ref{empthm} from Proposition \ref{Pcon} by     applying the proposition to the bounded quantity $\frac{\sum_{k=1}^{n}f(X^{i}_{t_k})}{n}$,  will then obtain  
   \begin{align*}   P\left(\left\vert \frac{\sum_{k=1}^{n}f(X^{i}_{t_k})}{n}-\frac{\sum_{k=1}^{n}\E^x [f(X^{i}_{t_k})]}{n}\right\vert \geq  \epsilon  \right)\leq Q  e^{-\epsilon  } ,  \end{align*} 
   which is a concentration property whose right hand side does not depend on  $n$. In general, to obtain the concentration inequalities of Theorem  \ref{empthm}, one needs to first derive a modified log-Sobolev inequality for a set of cylindrical functions, independent of the size of input $n$. Such an inequality is obtained in (\ref{cylMS}). Then, from this inequality,  the concentration property for the  multi-times  function is obtained.

We start with some technical results.   
\begin{lemma} \label{lemma6}Assume $f_i:[0,m]\rightarrow [0,\infty)$ is a 1-Lipschitz function  depending only on $x^{i}$. Then, for any $x\in D$ and any $k\in \N$,  
\begin{align*}e^{\lambda f_i(\Delta_{j_1}(...\Delta_{j_k}(x))}\leq d ^{k}e^{\lambda f_i(x)}
 \end{align*}
 and 
 \begin{align*} \Gamma (e^{\lambda f_i},e^{\lambda f_i})(\Delta_{j}(x))\leq \lambda^{2}d e^{2\lambda f_i(x)},
 \end{align*}
 for any $\lambda\leq 1$ and $d=\frac{M (\max_ {ij}w_{ij})^{2}}{2}   e^{2 m}>1$.
\end{lemma}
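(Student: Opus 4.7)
The statement reduces to two Lipschitz-plus-MVT estimates. The plan for both parts is to exploit (i) the bound $|f_i(z) - f_i(y)| \le |z^i - y^i| \le m$ coming from the $1$-Lipschitz property of $f_i$ together with the compact domain $[0,m]$, and (ii) the trivial inequality $e^{\lambda m} \le e^{m} \le e^{2m} \le d$ valid for $\lambda \le 1$, which lets every one-step Lipschitz error be paid for by a single factor of $d$.

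For the first bound, the plan is to show the single-step estimate $e^{\lambda f_i(\Delta_j(y))} \le d\, e^{\lambda f_i(y)}$ for every $y \in D$ and every index $j$, and then to iterate. Indeed, regardless of whether $j = i$ (so $(\Delta_j(y))^i = 0$) or $j \ne i$ (so $(\Delta_j(y))^i \in \{y^i, y^i + w_{ji}\}$), the change in the $i$-th coordinate is at most $m$; hence $|f_i(\Delta_j(y)) - f_i(y)| \le m$ and so $e^{\lambda f_i(\Delta_j(y))} \le e^{\lambda m} e^{\lambda f_i(y)} \le d\, e^{\lambda f_i(y)}$ since $d \ge e^{2m}$. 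Applying this $k$ times along the composition $\Delta_{j_1} \circ \cdots \circ \Delta_{j_k}$ yields the claimed $d^k$ bound.

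For the second bound, the plan is to expand
\[
\Gamma(e^{\lambda f_i}, e^{\lambda f_i})(\Delta_j(x)) = \tfrac{1}{2}\sum_{k=1}^N \phi\bigl((\Delta_j(x))^k\bigr)\bigl[e^{\lambda f_i(\Delta_k(\Delta_j(x)))} - e^{\lambda f_i(\Delta_j(x))}\bigr]^2,
\]
and to estimate each term using the mean value theorem in the form $|e^a - e^b| \le e^{\max(a,b)}|a-b|$. This separates each summand into an exponential factor and a difference factor. The difference factor $|f_i(\Delta_k(\Delta_j(x))) - f_i(\Delta_j(x))|$ is again Lipschitz-controlled by the $i$-th coordinate displacement, which is at most $\max_{ij} w_{ij}$ when $k \ne i$ and at most $m$ when $k = i$; in both cases it is bounded by a constant absorbed into $d$. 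The exponential factor is controlled by applying the first part of the lemma at most twice to reduce $e^{2\lambda \max(f_i(\Delta_k(\Delta_j(x))), f_i(\Delta_j(x)))}$ to a constant multiple of $e^{2\lambda f_i(x)}$, using $\lambda \le 1$ to replace every $e^{\lambda m}$ by $e^m$. Finally, summing in $k$ with the uniform bound $\phi \le M$ (and the harmless loss of a factor $N$ absorbed into $M$ via its definition $M = \sup (\sum_i \phi(x^i) + 1)^2$) produces the right-hand side $\lambda^2 d\, e^{2\lambda f_i(x)}$.

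The main obstacle is purely bookkeeping: making sure every stray $e^{\lambda m}$, $\max w_{ij}$, and $M$ slots into the stated $d$. There is no conceptual difficulty. The delicate case is the $k = i$ summand of $\Gamma$, where the jump to zero can move $f_i$ by as much as $m$ (rather than just $w_{ki}$); this is precisely why $d$ carries the $e^{2m}$ factor rather than only weights, and it is why the statement is formulated with the extra slack that the constraint $\lambda \le 1$ provides.
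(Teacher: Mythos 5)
Your sketch follows the paper's route exactly: a single-step Lipschitz estimate, iterated $k$ times, for the first bound, and a mean-value-theorem split of each summand of the carr\'e du champ for the second.

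The genuine problem is the $k=i$ summand of $\Gamma(e^{\lambda f_i},e^{\lambda f_i})(\Delta_j(x))$ when $j\neq i$ --- the one you yourself call ``delicate.'' It is not in fact absorbed by $d$ as stated, and the reason you give for why it should be (``this is precisely why $d$ carries the $e^{2m}$ factor'') conflates the two factors that $|e^a-e^b|\le e^{\max(a,b)}|a-b|$ produces. The $e^{2m}$ in $d$ is needed to dominate the squared \emph{exponential} factor: both $f_i(\Delta_i(\Delta_j(x)))=f_i(0)$ and $f_i(\Delta_j(x))$ differ from $f_i(x)$ by at most $m$, so $e^{2\lambda\max(a,b)}\le e^{2\lambda m}e^{2\lambda f_i(x)}\le e^{2m}e^{2\lambda f_i(x)}$, which already uses up that slot. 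It is therefore not available to also cover the squared \emph{displacement} factor $\lambda^2\bigl(f_i(0)-f_i((\Delta_j(x))^i)\bigr)^2$, which can be as large as $\lambda^2 m^2$ because the reset sends the $i$-th coordinate to zero. In $d=\tfrac{M(\max_{ij}w_{ij})^2}{2}e^{2m}$ the displacement slot is $(\max_{ij}w_{ij})^2$, and $m^2\le(\max_{ij}w_{ij})^2$ is not generally true, so this term does not close. You should be aware that the paper's own proof has the identical gap, only hidden: in the $j\neq i$ case it writes the $k$-th summand as $\bigl[e^{\lambda f_i(x^i+w_{ji}+w_{ki})}-e^{\lambda f_i(x^i+w_{ji})}\bigr]^2$, which is simply wrong for $k=i$ (there is no $w_{ii}$, and the correct inner argument is $f_i(0)$), so the $m^2$ never enters its accounting. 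As stated, the lemma needs either the extra hypothesis $m\le\max_{ij}w_{ij}$ or a corrected $d$ carrying $m^2$ (or $(m+\max_{ij}w_{ij})^2$, the choice the paper makes for the analogous constants $D_1,D_2,D_3$ in the proof of Proposition \ref{Pcon}) in place of $(\max_{ij}w_{ij})^2$. A smaller point of the same flavour: your chain $e^{\lambda m}\le e^{2m}\le d$ for part one assumes $\tfrac{M(\max_{ij}w_{ij})^2}{2}\ge 1$, which the stated hypothesis $d>1$ does not quite give you.
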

\begin{proof} 
At first  we show the first assertion of the lemma. For this, it is sufficient to show
\[e^{\lambda f_i} (\Delta_j(x))\leq d  e^{\lambda f_i}(x) \]
for some constant $d$ independent of $i$. Then the result follows by induction. 
We can write 
\[e^{\lambda f_i} (\Delta_j(x))=e^{\lambda (f_i(\Delta_j(x^{i}))-f_i(x^{i}))}e^{\lambda  f_i(x^{i})}\leq e^{\lambda ( \Delta_j(x^{i})- x^{i}) }e^{\lambda  f_i(x^{i})}.\]
If $j=i$, then $\vert \Delta_j(x^{i})- x^{i}\vert=x^{i}\leq m$, while, if $j\neq i$,  $\vert \Delta_j(x^{i})- x^{i}\vert=w_{ji}\leq \max_{ji} w_{ji}$, and so the result follows.

Now we show the second assertion.  
 If $j=i$, then $\Delta_j(x^{i})=0$ and so 
\begin{align*}\Gamma(e^{\lambda f_i},e^{\lambda f_i})(\Delta_i(x))=&\frac{1}{2}\sum_{k = 1 }^N \phi (x^k) \left[ e^{\lambda f_i(w_{ji} )} - e^{\lambda f_i(0)}   \right]^2  \\ \leq &\frac{\lambda^{2}}{2}\sum_{k = 1 }^N \phi (x^k) \left[   f_i(w_{ji} )- f_i(0)  \right]^2 e^{2\lambda \max\{ f_i(0),f_i(w_{ji})\}}
.\end{align*}
 But $f_i$ is   Lipschitz continuous, with Lipschitz constant one, which means that  $\left[   f_i(w_{ji} )- f_i(0)  \right]^2\leq w_{ji}^{2}$, and that for every $y\in\{0,w_{ji}\}:$ $f_i(y)=f_i(y)-f_i(x)+f_i(x)\leq \vert y-x\vert+f_i(x)\leq m+ f_i(x)$, since $x\leq m$. So,
 \begin{align*}\Gamma(e^{\lambda f_i},e^{\lambda f_i})(\Delta_i(x)) \leq \lambda^{2}\frac{M (\max_ {ij}w_{ij})^{2}}{2}   e^{2\lambda m} e^{2\lambda  f_i(x)}. \end{align*}
If $j\neq i$, then  $\Delta_j(x^{i})=x^{i}+w_{ji}$ and so 
\begin{align*}\Gamma(e^{\lambda f_i},&e^{\lambda f_i})(\Delta_j(x))=\frac{1}{2}\sum_{k = 1 }^N \phi (x^k) \left[ e^{\lambda f_i(x^{i}+w_{ji}+w_{ki})} - e^{\lambda f_i(x^{i}+w_{ji})}  \right]^2\leq 
 \\  &   \leq\frac{\lambda^{2}}{2}\sum_{k = 1 }^N \phi (x^k) \left[   f_i(x^{i}+w_{ji}+w_{ki} )- f_i(x^{i}+w_{ji})  \right]^2 e^{2\lambda \max\{  f_i(x^{i}+w_{ji}+w_{ki} ), f_i(x^{i}+w_{ji})\}} .
\end{align*}
But $\max\{  f_i(x^{i}+w_{ji}+w_{ki} ), f_i(x^{i}+w_{ji})\}\leq \vert w_{ji}+w_{ki}\vert+f_i(x)$, which leads to
\begin{align*}\Gamma(e^{\lambda f_i},e^{\lambda f_i})(\Delta_j(x))  \leq \lambda^{2}\frac{M (\max_ {ij}w_{ij})^{2}}{2}   e^{2\lambda  \max_ {ij}w_{ij}} e^{2\lambda  f_i(x)} .
\end{align*}

\end{proof}
Since in this section we will be concerned with cylindrical functions, in order to ease the notation, for any sequence   of times $0=t_1<t_2<...<t_n\leq  T$ and a  function $F:\R^n\rightarrow\R,$ depending on $\{X_{t_k},k=1,...,n\},$  we  will  write $\hat P_{t_n}F(x):=\E^x[F(X_{t_1},...,X_{t_n})]  $. When $n=1$,  this is of course the Markov semigroup, $\hat P_{t_1}F(x)=P_{t_1}F(x)=\E^x[F(X_{t_1})]$. 

The following lemma will be the main iteration tool that will be used in order to obtain the modified log-Sobolev inequality for cylindrical functions shown in  (\ref{cylMS}).
\begin{lemma} \label{InducLem} Assume  the PJMP as described in  (\ref{eq:generator0})-(\ref{phi2}). Assume  $\xi:D^{n}\rightarrow D^{n}$ and $f:\R^{n} \rightarrow \R$ are  such that $\frac{f(\xi(y))}{f(y)}\leq d^{2}$ for any $y\in D$, for some $d>1$. Then, for any sequence of times $0=t_1<t_2<...<t_n\leq  T$, we have
 \begin{align*}
 P_{t_{k+1}-t_{k}}&\left(\frac{ \Gamma  (\hat P_{t_{k}}f, \hat P_{t_{k}}f)(\xi(x))}{ \hat P_{t_{k}}f(x)} \right) \leq\\ & b_k\sum_{r=1}^{k+2}\left[ \sum_{ i_r = 1 }^N...\sum_{ i_2 = 1 }^N \sum_{i_1=1}^N \hat P_{t_{k+1}}\left(\frac{  \Gamma(f,f)(\Delta_{i_{r+2}}(...\Delta_{i_2}( \Delta_{i_1}(\xi(x))
))}{f(x)} \right)\right],
 \end{align*}
 where above we have denoted  $b_k=3^{k}b_0(k),$ for   $b_0(k)=\max\{2+2d^{2}Mc(t_{k}-t_{k-1}),2c M+2 d(t_k-t_{k-1}), 2cM d(t_k-t_{k-1})\}$. 
\end{lemma}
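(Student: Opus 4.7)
The plan is to proceed by induction on $k$, using Lemma \ref{lastLem} (the weak sweeping-out relation for $\Gamma$ of the semigroup) and Lemma \ref{neolemma} (which converts a semigroup denominator $P_{t-s}f$ into $f$ at the cost of an extra sum over $\Delta_j$) as the two basic engines. The base case $k=0$ reduces the left-hand side to $P_{t_1-t_0}\!\bigl(\Gamma(f,f)(\xi(x))/f(x)\bigr)$, which already matches the stated right-hand side once $b_0(0)$ is chosen to absorb the universal constants.

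For the inductive step, exploit the Markov property to write
\[
\hat P_{t_k} f \;=\; P_{t_k - t_{k-1}}\bigl( G_{k}\bigr),
\]
where $G_k$ is the cylindrical function obtained by integrating out the first $k-1$ time coordinates. Apply Lemma \ref{lastLem} with $G_k$ in place of $f$ at the point $\xi(x)$, producing three summands: one with $\Gamma(G_k, G_k)(\xi(x))$, one with $\Gamma(G_k, G_k)(\Delta_i \xi(x))$, and a third of the form $2Mc(t_k-t_{k-1})\,P_{t_k-t_{k-1}}\!\bigl(\Gamma(G_k,G_k)/G_k\bigr)\cdot P_{t_k-t_{k-1}}G_k$ evaluated at $\xi(x)$. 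It is precisely these three summands that produce the factor $3$ in the recurrence $b_k = 3\, b_{k-1}$ and hence the overall $3^{k}$.

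Then divide by $\hat P_{t_k}f(x)$ and apply $P_{t_{k+1}-t_k}$. Lemma \ref{neolemma} now exchanges the denominator $P_{t_k-t_{k-1}}G_k$ for $f$, at the cost of an extra $\sum_{j}$ over $\Delta_j$, adding one more $\Delta$-level (which is what drives the depth index $r$ up by one) and contributing the constant $d(t_k-t_{k-1})$ into $b_0(k)$. The semigroup property $P_{t_{k+1}-t_k} \hat P_{t_k} = \hat P_{t_{k+1}}$ promotes the outer average to $\hat P_{t_{k+1}}$, as required by the right-hand side. Applying the inductive hypothesis to the inner $\Gamma(G_k,G_k)$ terms then gives the sum over $r=1,\dots,k+2$ and the composed jump operators $\Delta_{i_{r+2}}(\cdots \Delta_{i_1}(\xi(x)))$. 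The hypothesis $f(\xi(y))/f(y)\leq d^{2}$ is invoked exactly to dominate the extra factor $P_{t_k-t_{k-1}}G_k(\xi(x))$ produced by the third summand of Lemma \ref{lastLem}, permitting it to be absorbed cleanly into the ratio and contributing the $2+2d^{2}Mc(t_k-t_{k-1})$ appearing in $b_0(k)$.

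The main obstacle is the combinatorial bookkeeping: verifying that exactly one extra level of nested $\Delta$'s is introduced per inductive step (giving the depth $r+2$ with $r\leq k+2$), that each of the three summands from Lemma \ref{lastLem} interacts correctly with Lemma \ref{neolemma} and the inductive hypothesis, and that the three constants in $b_0(k)$ (coming respectively from the three summands, the $d(\cdot)$ from Lemma \ref{neolemma}, and the $c,c(\cdot)$ from Lemma \ref{lastLem}) all fit under the stated $\max$. The structural idea parallels exactly the proof of Theorem \ref{thmCon}, but iterated $k$ times and applied at the jumped point $\xi(x)$ rather than at $x$ itself.
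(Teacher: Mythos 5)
Your proposal is correct and follows essentially the same route as the paper's proof: rewrite $\hat P_{t_k}f = P_{t_k-t_{k-1}}\hat P_{t_{k-1}}f$, apply the sweeping-out bound of Lemma \ref{lastLem}, use the hypothesis $f(\xi(y))\le d^2 f(y)$ to absorb the factor $\hat P_{t_k}f(\xi(x))/\hat P_{t_k}f(x)$, apply Lemma \ref{neolemma} to strip one semigroup layer from the denominator at the cost of an extra $\Delta$-level, and iterate down to $\hat P_{t_0}f=f$, with the three summands of Lemma \ref{lastLem} producing the $3^k$ and the constants $c$, $c(\cdot)$, $d(\cdot)$, $d^2$ assembling into $b_0(k)$.
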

\begin{proof}  We can write
 \begin{align*} 
\Gamma  (\hat P_{t_{k}}f, \hat P_{t_{k}}f)(\xi(x))= \Gamma  ( P_{t_{k}-t_{k-1}} \hat P_{t_{k-1}}f, P_{t_{k}-t_{k-1}} \hat P_{t_{k-1}}f)(\xi(x)),
 \end{align*}
where above $ \hat P_{t_{k}}$ and  $ \hat P_{t_{k-1}}$ refer to the expectation with respect to $\{X_{t_1},...,X_{t_k}\}$ and $\{X_{t_1},...,X_{t_{k-1}}\}$ respectively. One should notice that $ P_{t_k-t_{k-1}}$ is the semigroup related to   $X_{t_{k}}$ (which appears after time    $t_{k}-t_{k-1}$  from $X_{t_{k-1}}$) and so we can apply the tools we have already obtained about the Markov semigroups in previous sections.  We can start by   applying the sweeping out relationship of Lemma \ref{lastLem} to get the semigroup out of the carr\'e du champ 
  \begin{align*}
\Gamma  (\hat  P_{t_{k}}f, & \hat P_{t_{k}}f)(\xi(x)) \leq 
2\Gamma(\hat  P_{t_{k-1}}f,  \hat P_{t_{k-1}}f)( \xi(x))+\\  & + 2c \sum_{i=1}^N \phi(\xi(x)^{i})  \Gamma(\hat  P_{t_{k-1}}f,  \hat P_{t_{k-1}}f)(\Delta_i(\xi(x)))+ \\  & +2Mc(t_{k}-t_{k-1})   P_{t_{k}-t_{k-1}}\left(\frac{\Gamma (\hat  P_{t_{k-1}}f,  \hat P_{t_{k-1}}f)(\xi(x))}{  \hat P_{t_{k-1}}f(\xi(x))}\right)   \hat P_{t_{k}}(f(\xi(x))),
 \end{align*}
 since $  P_{t_{k}-t_{k-1}}  \hat P_{t_{k-1}}=  \hat P_{t_{k}}$.
 From this we obtain
  \begin{align*}
  P_{t_{k+1}-t_{k}}&\left(\frac{ \Gamma  (\hat  P_{t_{k}}f,\hat  P_{t_{k}}f)(\xi(x))}{  \hat P_{t_{k}}f(x)} \right) \leq 
2  P_{t_{k+1}-t_{k}}\left(\frac{\Gamma(\hat  P_{t_{k-1}}f,  \hat P_{t_{k-1}}f)( \xi(x)) }{  \hat P_{t_{k}}f(x)} \right)+\\  &
+ 2cM \sum_{i=1}^N  P_{t_{k+1}-t_{k}}\left(\frac{ \Gamma(\hat  P_{t_{k-1}}f,  \hat P_{t_{k-1}}f)(\Delta_i(\xi(x)))}{  \hat P_{t_{k}}f(x)} \right)+ \\  & +2d^{2}Mc(t_{k}-t_{k-1})  P_{t_{k+1}-t_{k-1}}\left(\frac{\Gamma (  \hat P_{t_{k-1}}f,  \hat P_{t_{k-1}}f)(\xi(x))}{  \hat P_{t_{k-1}}f(x)}\right) ,
 \end{align*}
 where above we also used  that  $\frac{  \hat P_{t_k}(f(\xi(x)))}{  \hat P_{t_k}(f(x))}\leq d^{2}$. This  follows from the hypothesis $f(\xi(y))\leq d^{2}f(y)$, $\forall y\in D,$   after taking the expectation on both sides with respect to the   $  \hat P_{t_k}$, that is,  $   \hat P_{t_k}(f(\xi(x)))\leq d^{2}  \hat P_{t_k}(f(x))$. 
 We can write the denominator of the first two terms on the right hand side as $  \hat P_{t_k}=  P_{t_k-t_{k-1}}  \hat P_{t_{k-1}}$, and then apply Lemma  \ref{neolemma}, to reduce the  denominator from  $\hat P_{t_k}$ to $\hat P_{t_{k-1}}$, as shown below
  \begin{align*}
 P_{t_{k+1}-t_{k}}&\left(\frac{ \Gamma  (\ \hat P_{t_{k}}f, \hat P_{t_{k}}f)(\xi(x))}{ \hat P_{t_{k}}f(x)} \right) \leq \\  & 
(2+2d^{2}Mc(t_{k}-t_{k-1}) ) P_{t_{k+1}-t_{k-1}}\left(\frac{\Gamma(\hat P_{t_{k-1}}f, \hat P_{t_{k-1}}f)(\xi(x)) }{ \hat P_{t_{k-1}}f(x)} \right)+\\  +&
( 2c M+2 d(t_k-t_{k-1}))\sum_{i_1=1}^N P_{t_{k+1}-t_{k-1}}\left(\frac{   \Gamma(\hat P_{t_{k-1}}f, \hat P_{t_{k-1}}f)(\Delta_{i_1}(\xi(x)))}{ \hat P_{t_{k-1}}f(x)} \right)+ \\ + &
 2cM d(t_k-t_{k-1})\sum_{ i_2 = 1 }^N \sum_{i_1=1}^N P_{t_{k+1}-t_{k-1}}\left(\frac{  \Gamma(\hat P_{t_{k-1}}f, \hat P_{t_{k-1}}f)(\Delta_{i_2}( \Delta_{i_1}(\xi(x))
))}{ \hat P_{t_{k-1}}f(x)} \right).
 \end{align*}
 If  we define $b_0(k)=\max\{2+2d^{2}Mc(t_{k}-t_{k-1}),2c M+2 d(t_k-t_{k-1}), 2cM d(t_k-t_{k-1})\}$, then by induction  we finally get
  \begin{align*}
 P_{t_{k+1}-t_{k}}&\left(\frac{ \Gamma  (\hat P_{t_{k}}f, \hat P_{t_{k}}f)(\xi(x))}{ \hat P_{t_{k}}f(x)} \right) \leq \\ & b_k\sum_{r=1}^{k+2}\left[ \sum_{ i_r = 1 }^N...\sum_{ i_2 = 1 }^N \sum_{i_1=1}^N  \hat P_{t_{k+1}}\left(\frac{  \Gamma(f,f)(\Delta_{i_{r+2}}(...\Delta_{i_2}( \Delta_{i_1}(\xi(x))
))}{f(x)} \right)\right],
 \end{align*}
 for  $b_k=3^{k}b_0(k)$, where above we used that $t_0=0$ and $ \hat P_0f(x)=P_{0}f(x)=f(x)$.
  
\end{proof}
In the next proposition   a bound of the entropy  of  multi-times  functions is presented. Furthermore,  in the process of proving this bound   a modified log-Sobolev inequality is also established, in (\ref{cylMS}), for  multi-times  functions.  
\begin{proposition}\label{Propo2}Consider some $T>0$ and     any sequence of times $0=t_1<t_2<...<t_n\leq  T$,  such that \[D(T)=3 \sum_{k=1}^{\infty}  \delta(t_k-t_{k-1})   b_{k-1}\sum_{r=1}^{k+1} (Nd)^{r+4}<\infty.\] 
 For $i=1,...,N$, assume $f_i:[0,m]\rightarrow [0,\infty)$ is a 1-Lipschitz function  depending only on $x^{i}$ and $f(X^{i}_{t_1},...,X^{i}_{t_n}):=\sum_{k=1}^{n}f_i(X^{i}_{t_k})$.  
Then, for $\lambda\leq 1$,
 \begin{align*}   \hat P_{t_{n}}( e^{\lambda f(X^{i}_{t_1},...,X^{i}_{t_n})}\log \frac{e^{\lambda f(X^{i}_{t_1},...,X^{i}_{t_n})}}{ \hat P_{t_{n}} e^{\lambda f(X^{i}_{t_1},...,X^{i}_{t_n})}})\leq 
 \lambda^{2} D(T)\hat
    P_{t_{n}}\left(e^{\lambda f(X^{i}_{t_1},...,X^{i}_{t_n})} \right).
 \end{align*}  
 \end{proposition}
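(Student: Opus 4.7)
The plan is to combine an entropy decomposition along the filtration $\mathcal{F}_k=\sigma(X_{t_1},\ldots,X_{t_k})$ with the one-time modified log-Sobolev inequality of Theorem \ref{thmCon} and the two lemmas of this section. Writing $F_k:=\sum_{\ell=1}^k f_i(X_{t_\ell}^i)$ and introducing the forward ``future'' factor
\[
V_k(y) := \E^y\!\Bigl[\exp\!\Bigl(\lambda\sum_{\ell=k+1}^n f_i(X_{t_\ell-t_k}^i)\Bigr)\Bigr], \qquad V_n\equiv 1,
\]
the Markov property gives $\E[e^{\lambda F_n}\mid \mathcal{F}_k]=e^{\lambda F_k}V_k(X_{t_k})$, and the standard telescoping of entropy along $\mathcal{F}_1\subset\cdots\subset\mathcal{F}_n$ produces
\[
\hat P_{t_n}\!\Bigl(e^{\lambda F_n}\log\tfrac{e^{\lambda F_n}}{\hat P_{t_n}e^{\lambda F_n}}\Bigr) \;=\; \sum_{k=2}^n \hat P_{t_{k-1}}\!\Bigl[e^{\lambda F_{k-1}}\,\mathrm{Ent}_{P_{t_k-t_{k-1}}}\!\bigl(e^{\lambda f_i}V_k\bigr)\Bigr].
\]

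On each summand I apply Theorem \ref{thmCon} to the single-state function $h_k:=e^{\lambda f_i}V_k$ at horizon $t_k-t_{k-1}$, producing three carré du champ terms of the form $P_{t_k-t_{k-1}}(\Gamma(h_k,h_k)(\Delta_{\vec j}(\cdot))/h_k(\cdot))$ for $\Delta_{\vec j}\in\{\mathrm{id},\Delta_{j_1},\Delta_{j_1}\Delta_{j_2}\}$, each weighted by $\delta(t_k-t_{k-1})$. A Leibniz-type bound $\Gamma(fg,fg)(y) \leq 2\max_i f(\Delta_i y)^2\,\Gamma(g,g)(y) + 2\,g(y)^2\,\Gamma(f,f)(y)$ for the jump carré du champ, together with Lemma \ref{lemma6} applied to $e^{\lambda f_i}$, splits each $\Gamma(h_k,h_k)$ into an ``exponential'' piece, immediately bounded by $\lambda^2 d\,e^{2\lambda f_i}$, and a ``future'' piece containing $\Gamma(V_k,V_k)$ at iteratively $\Delta$-shifted states.

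The future piece is the heart of the argument: since $V_k$ is itself a cylindrical forward expectation of the inner exponential $e^{\lambda\sum_j f_i(Z_j)}$, Lemma \ref{InducLem} applies, trading the $P_{t_k-t_{k-1}}$ of $\Gamma(V_k,V_k)$ at a shifted state for a factor $b_{k-1}$ times $\hat P_{t_n}$-expectations of $\Gamma(e^{\lambda f_i},e^{\lambda f_i})$ at iteratively $\Delta$-shifted positions of depth at most $r+2$, for $r\leq k+1$. A second invocation of Lemma \ref{lemma6} bounds these innermost carré du champs by $\lambda^2 d^{O(r)} e^{2\lambda f_i}$, and counting the $N^r$ index tuples $(i_1,\ldots,i_r)$ reproduces the combinatorial weight $(Nd)^{r+4}$. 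Collapsing $\hat P_{t_{k-1}}\!\circ P_{t_k-t_{k-1}}=\hat P_{t_k}$ and the tower identity $\hat P_{t_k}[e^{\lambda F_{k-1}}h_k(X_{t_k})]=\hat P_{t_n}(e^{\lambda F_n})$ absorbs every remaining $\hat P$ into the single factor $\hat P_{t_n}(e^{\lambda F_n})$, and summing over $k$ yields $\lambda^2 D(T)\hat P_{t_n}(e^{\lambda F_n})$.

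The principal obstacle I anticipate is the $\Gamma(V_k,V_k)$ step: the absence of a translation property for this degenerate PJMP forbids the simple sweeping-out $\Gamma(\hat P f,\hat P f)\le\hat P\,\Gamma(f,f)$, and that failure is exactly what forces both the $b_{k-1}$ factor (via the iterative Lemma \ref{InducLem}) and the combinatorial $(Nd)^{r+4}$ weight. Verifying that, after all of these accumulated constants combine across the entropy tower, the prefactor is exactly $D(T)$ is the most delicate bookkeeping step.
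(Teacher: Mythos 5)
Your proposal follows the same route as the paper's proof: the forward martingale telescope along $\mathcal F_k=\sigma(X_{t_1},\ldots,X_{t_k})$ reproducing the paper's identity \eqref{inducEn}, an application of Theorem~\ref{thmCon} at each level with horizon $t_k-t_{k-1}$, the iterative sweeping-out Lemma~\ref{InducLem} to push the semigroup through the carr\'e du champ, and Lemma~\ref{lemma6} to close the estimate. Your explicit factorization $M_k=e^{\lambda F_{k-1}}\,e^{\lambda f_i(X_{t_k}^i)}\,V_k(X_{t_k})$ is in fact a more transparent way of writing what the paper denotes $\hat P_{t_{k-1}}e^{\lambda f}$.

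The one place you deviate is the intermediate Leibniz-type product rule $\Gamma(e^{\lambda f_i}V_k,\,e^{\lambda f_i}V_k)\le 2\max_j e^{2\lambda f_i(\Delta_j\cdot)}\,\Gamma(V_k,V_k)+2V_k^2\,\Gamma(e^{\lambda f_i},e^{\lambda f_i})$. The paper does not do this split: it feeds the whole conditional expectation directly into Lemma~\ref{InducLem}, whose conclusion already produces the innermost carr\'e du champ $\Gamma(e^{\lambda f},e^{\lambda f})$ evaluated along iterated $\Delta$-shifts, and then bounds that with Lemma~\ref{lemma6}. Your split is a legitimate alternative, but note two bookkeeping consequences. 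First, after dividing the Leibniz bound by $h_k(y)=e^{\lambda f_i(y)}V_k(y)$ you pick up ratios of the form $V_k(\Delta_j y)^2/V_k(y)$ and $e^{2\lambda f_i(\Delta_j y)}/e^{\lambda f_i(y)}$; the first needs a $\Delta$-shift estimate for $V_k$ itself, which does not come for free because the process has no translation property — it has to be derived by applying the pointwise estimate of Lemma~\ref{lemma6} to the inner exponential and then taking expectations, exactly the step Lemma~\ref{InducLem} already performs internally. Second, the Leibniz split introduces an extra factor of two on top of the factor of three from Theorem~\ref{thmCon}, so your prefactor will be larger than the paper's $D(T)$; this is harmless since $D(T)$ is only required to be finite, but worth stating explicitly rather than claiming the constants match exactly.

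Since you flag the final constant matching yourself, the one substantive point to fix is the control of $V_k(\Delta_j y)/V_k(y)$: spell out that this follows by taking $\hat P$-expectations in the first assertion of Lemma~\ref{lemma6}, so that the condition of Lemma~\ref{InducLem} is met for the ``future'' piece.
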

 \begin{proof} To prove the  multi-times  modified  log-Sobolev inequality of the proposition, we will take advantage of the modified inequality shown on Theorem \ref{thmCon} and then use iteration, as was done in \cite{W-Y} and \cite{Chaf} to prove coersive inequalities for cylindrical functions.  To do so, we will first form the entropies for the successive times. To ease the notation,  we will write $f(x)$ for $f(X^{i}_{t_1},...,X^{i}_{t_n})$. We have
 \begin{align}\label{inducEn}   \hat P_{t_{n}}(e^{\lambda f(x)}\log \frac{e^{\lambda f(x)}}{ \hat P_{t_{n}} e^{\lambda f(x)}})= 
 \sum_{k=1}^{n} P_{t_{n}-t_k} P_{t_k-t_{k-1}}(\hat  P_{t_{k-1}}e^{\lambda f(x)} \log \frac{  \hat P_{t_{k-1}}e^{\lambda f(x)}}{ P_{t_k-t_{k-1}} \hat P_{t_{k-1}} e^{\lambda f(x)}}),
 \end{align} 
 where above we used that $ P_t \hat P_s= \hat P_{t+s}$.  We can now apply the modified log-Sobolev inequality of  Theorem \ref{thmCon} to bound the entropies of $ \hat P_{t_{k-1}}f$ with respect to the measure $ P_{t_k-t_{k-1}}$ involved in the sum. This   gives
   \begin{align*} P_{t_k-t_{k-1}}(\hat  P_{t_{k-1}}e^{\lambda f(x)} \log &\frac{  \hat P_{t_{k-1}}e^{\lambda f(x)}}{ P_{t_k-t_{k-1}} \hat P_{t_{k-1}} e^{\lambda f(x)}})\leq \\   &
  \delta(t_k-t_{k-1})  P_{t_k-t_{k-1}}\left(\frac{\Gamma (\hat P_{t_{k-1}}e^{\lambda f}, \hat P_{t_{k-1}}e^{\lambda f})(x)}{ \hat P_{t_{k-1}}e^{\lambda f}(x)}\right) \\ +&  \delta(t_k-t_{k-1})\sum_{j=1}^N   P_{t_k-t_{k-1}}\left(\frac{ \Gamma(\hat P_{t_{k-1}}f, \hat P_{t_{k-1}}f)(\Delta_j(x))}{  \hat P_{t_{k-1}}f(x)}\right)+  \\  +& \delta(t_k-t_{k-1}) \sum_{i,j=1}^N   P_{t_k-t_{k-1}}\left(\frac{ \Gamma ( \hat P_{t_{k-1}}f, \hat P_{t_{k-1}}f)(\Delta_i(\Delta_j(x)))}{ \hat P_{t_{k-1}} f(x)}\right).
 \end{align*}  
 If we use the last inequality to bound  the right hand side of (\ref{inducEn}) we get
  \begin{align*}  \hat P_{t_{n}}(e^{\lambda f(x)}&\log  \frac{e^{\lambda f(x)}}{ \hat P_{t_{n}} e^{\lambda f(x)}})\leq  \\  &
 \sum_{k=1}^{n} \delta(t_k-t_{k-1})    P_{t_{n}-t_{k}}P_{t_{k}-t_{k-1}}\left(\frac{\Gamma (\hat P_{t_{k-1}}e^{\lambda f}, \hat P_{t_{k-1}}e^{\lambda f})(x)}{ \hat P_{t_{k-1}}e^{\lambda f}(x)}\right) \\ +&  \sum_{k=1}^{n}  \delta(t_k-t_{k-1})\sum_{j=1}^N       P_{t_{n}-t_{k}}P_{t_{k}-t_{k-1}}\left(\frac{ \Gamma (\hat P_{t_{k-1}}f, \hat P_{t_{k-1}}f)(\Delta_j(x))}{  \hat P_{t_{k-1}}f(x)}\right)+  \\  +& \sum_{k=1}^{n}  \delta(t_k-t_{k-1}) \sum_{i,j=1}^N       P_{t_{n}-t_{k}}P_{t_{k}-t_{k-1}}\left(\frac{ \Gamma (\hat P_{t_{k-1}}f, \hat P_{t_{k-1}}f)(\Delta_i(\Delta_j(x)))}{ \hat P_{t_{k-1}} f(x)}\right).
 \end{align*}  
  To bound the righthand side we can use Lemma \ref{InducLem}
for  $P_{t_{k}-t_{k-1}}$, since   Lemma \ref{lemma6} guaranties that the main condition of Lemma \ref{InducLem}
 is satisfied. We then  get \begin{align}\label{cylMS} & \hat P_{t_{n}}(e^{\lambda f(x)}\log \frac{e^{\lambda f(x)}}{ \hat P_{t_{n}} e^{\lambda f(x)}})\leq \\ & \nonumber
 \sum_{k=1}^{n} \delta(t_k-t_{k-1})b_{k-1}\sum_{r=1}^{k+1}\left[ \sum_{ i_{r+2} = 1 }^N... \sum_{i_1=1}^N \hat P_{t_{n}}\left(\frac{  \Gamma(e^{\lambda f},e^{\lambda f})(\Delta_{i_{r+2}}(...\Delta_{i_2}( \Delta_{i_1}(x)
))}{e^{\lambda f(x)}} \right)\right]\\ +& \nonumber \sum_{k=1}^{n}  \delta(t_k-t_{k-1})\sum_{j=1}^N b_{k-1}\sum_{r=1}^{k+1}\left[ \sum_{ i_{r+2} = 1 }^N...\sum_{i_1=1}^N \hat P_{t_{n}}\left(\frac{  \Gamma(e^{\lambda f},e^{\lambda f})(\Delta_{i_{r+2}}(...( \Delta_{i_1}(\Delta_j(x))
))}{e^{\lambda f(x)}} \right)\right]  \\  +& \nonumber\sum_{k=1}^{n}  \delta(t_k-t_{k-1}) \sum_{i,j=1}^N b_{k-1}\sum_{r=1}^{k+1}\left[ \sum_{ i_{r+2} = 1 }^N... \sum_{i_1=1}^N \hat P_{t_{n}}\left(\frac{  \Gamma(e^{\lambda f},e^{\lambda f})(\Delta_{i_{r+2}}(...( \Delta_{i_1}(\Delta_i(\Delta_j(x)))
))}{e^{\lambda f(x)}} \right)\right],
 \end{align}  
 where above we used that $P_{t_{n}-t_{k}}P_{t_{k}-t_{k-1}}\hat P_{t_{k-1}}=\hat P_{t_n}$.  From Lemma \ref{lemma6}, we can further bound  
 \begin{align*} \Gamma (e^{\lambda f},e^{\lambda f})(\Delta_{j_r}(\Delta_{j_{r-1}}...\Delta_{j_1}(x)))\leq \lambda^{2}d e^{2\lambda f(\Delta_{j_{r-1}}...\Delta_{j_1}(x)))}\leq \lambda^{2}d^{r} e^{2\lambda f_i(x)} ,
 \end{align*}
where at first inequality we used the second assertion of the lemma to bound the carr\'e du champ and then the first assertion. From this we finally obtain  \begin{align*}  \hat P_{t_{n}}(e^{\lambda f(x)}\log \frac{e^{\lambda f(x)}}{ \hat P_{t_{n}} e^{\lambda f(x)}})\leq 3 \lambda^{2}\sum_{k=1}^{n}  \delta(t_k-t_{k-1})   b_{k-1}\sum_{r=1}^{k+1} (Nd)^{r+4} \hat P_{t_{n}}\left(e^{\lambda f(x)} \right),
 \end{align*}  
and so the proposition follows for $D(T)=3 \sum_{k=1}^{\infty}  \delta(t_k-t_{k-1})   b_{k-1}\sum_{r=1}^{k+1} (Nd)^{r+4}$.
  
   \end{proof}

\underline{proof of Theorem  \ref{empthm}: }

˜

We can now prove Theorem  \ref{empthm}.
Consider  $f(X^{i}_{t_1},...,X^{i}_{t_n})=\sum_{k=1}^nf(X^{i}_{t_k})$. For economy we will write  $f(x)$ for $f(X^{i}_{t_1},...,X^{i}_{t_n})$ and $X^{i}_{t_1}=x$. Then, for $\lambda \leq 1$, Proposition \ref{Propo2} gives 
 \begin{align*}   \hat P_{t_{n}}(e^{\lambda f(x)}\log \frac{e^{\lambda f(x)}}{ \hat P_{t_{n}} e^{\lambda f(x)}})\leq
 D(T)\hat
    P_{t_{n}}\left(e^{\lambda f(x)}\right).
 \end{align*}  
  Denote $\Psi(\lambda)=  \hat P_{t_{n}}\left( e^{ \lambda f}(x) \right) $. Then,  the last inequality can be written as 
 \[\frac{\lambda \Psi '(\lambda)}{\Psi (\lambda)}-\log \Psi (\lambda)\leq  \lambda^{2} D (T)e^{a \lambda }.\]
If we  now divide with $\lambda^{2}$ we then get 
 \[\frac{d}{d \lambda}\left(  \frac{\log \Psi (\lambda)}{\lambda} \right)\leq D(T)e^{a \lambda}.\]
 Since $\lim_{\lambda \rightarrow 0} \frac{\log \Psi (\lambda)}{\lambda }=\hat P_{t_{n}}(f(x))$, by integration we obtain
 \[ \hat P_{t_{n}}\left( e^{\lambda (f(x)-P_{t_{n}} f(x))}  \right)\leq e^{D(T)\lambda \int_0^{\lambda}e^{as}ds}\]
 and so
\[ \hat P_{t_{n}}\left( f(x)- \hat P_{t_n} f(x) \geq \epsilon  \right)\leq e^{-\epsilon \lambda}   \hat P_{t_{n}}\left(   e^{\lambda (f(x)-\hat P_{t_{n}}f(x))} \right)\leq  e^{-\epsilon \lambda} e^{D(T)\lambda \int_0^{\lambda}e^{as}ds}.\] 
 As a result,  if we consider $\lambda=1$ we have 
  \begin{align*}   \hat P_{t_{n}}\left(\frac{\sum_{k=1}^{n}f(X^{i}_{t_k})}{n}- \hat P_{t_{n}}\frac{\sum_{k=1}^{n}f(X^{i}_{t_k})}{n} \geq  \epsilon  \right)=&   \hat P_{t_{n}}\left(\sum_{k=1}^{n}X^{i}_{t_k}- \hat P_{t_{n}}\sum_{k=1}^{n}X^{i}_{t_k} \geq    n\epsilon  \right) \\  \leq &e^{-\epsilon  } \hat P_{t_{n}}\left(   e^{ (f(x)-\hat P_{t_{n}}f(x))} \right)\leq  e^{-\epsilon n} G   \end{align*} 
 where $G=e^{D(T) \int_0^{1}e^{as}ds}$. Next, if we repeat  the same for $-f$ in the place of $f$, we get 
   \begin{align*}   \hat P_{t_{n}}\left( \hat P_{t_{n}}\frac{\sum_{k=1}^{n}f(X^{i}_{t_k})}{n}-\frac{\sum_{k=1}^{n}f(X^{i}_{t_k})}{n} \geq  \epsilon  \right)\leq  e^{-\epsilon n} G.  \end{align*} 
 To finish the proof, it is insufficient to observe that      $D(T)<\infty$ implies  that $\delta(t_k-t_{k-1})   $ is uniformly bounded for any $k$, which implies the same for $t_k-t_{k-1}$. From this  we   conclude that $b_0(k)$ is also uniformly bounded on $k$. As a result, for  $D(T)<\infty,$ it is sufficient to assume   condition (\ref{condit1}) of the statement of the theorem.
  \qed

\subsection{proof of Proposition \ref{Pcon}:}\label{Pcon2}

 Consider $f(x)=\sum_{i=1}^{N}f_i(x^{i})$, for some $x=(x^{1},...,x^{N})$ and $0<\lambda\leq 1$. 
Then, from Theorem \ref{thmCon} we directly obtain  
\begin{align} \nonumber P_t&( e^{\lambda f(x)}\log \frac{e^{\lambda f(x)}}{P_t e^{\lambda f(x)}}) \leq   \delta(t)P_{t}\left(\frac{\Gamma(e^{\lambda f},e^{\lambda f})(x)}{e^{\lambda f}(x)}\right)+\\ & +  \delta(t)\sum_{j=1}^N  P_{t}\left(\frac{ \Gamma(e^{\lambda f},e^{\lambda f})(\Delta_j(x))}{ e^{\lambda f}(x)}\right)  + \delta(t) \sum_{i,j=1}^N    P_{t}\left(\frac{ \Gamma(e^{\lambda f},e^{\lambda f})(\Delta_i(\Delta_j(x)))}{ e^{\lambda f}(x)}\right).
\label{Pr1_0}\end{align}  
For the carr\'e du champ on the first term on the right hand side, we can compute
  \begin{align}\nonumber \Gamma (f,f)(x)= & \frac{1}{2} \sum_{ j = 1 }^N \phi (x^j) (e^{\lambda f} (\Delta_j (x))-e^{\lambda f} (x))^2  \\ \leq &\lambda^{2} 2^{N-1}  \sum_{ j = 1 }^N \phi (x^j) \sum_{i=1}^{N} (f_i (\Delta_j (x))-f _i(x))^2 e^{2\lambda \sum_{i=1}^{N}\max\{ f_i(x),f_i(\Delta_j (x))\}} . \nonumber \end{align}
 But, since $f_i$ is Lipschitz continuous,   with Lipschitz constant $1$, we can bound $\vert f_i (\Delta_j (x))-f _i(x)\vert\leq m+\max_{i,j}\{w_{ij}\}$ and $ f_i (\Delta_j (x))\leq  f_i (x)+m+\max_{i,j}\{w_{ij}\} $, and so  the last can be bounded as follows  \begin{align}  \label{Pr1_1}\Gamma (f,f)(x)\leq  &
   \lambda^{2}D_1e^{2\lambda f(x) } ,
 \end{align}
 where $D_1= 2^{N-1}MN (m+\max_{i,j}\{w_{ij}\})^2 e^{2  N(m+\max_{i,j}\{w_{ij}\})}$.
Similarly, for the second term of (\ref{Pr1_0}) we compute
  \begin{align}\label{Pr1_2}\Gamma (f,f)(\Delta_i(x))\leq  & \frac{1}{2} \sum_{ j = 1 }^N \phi (\Delta_i ( x)^j) (e^{\lambda f} (\Delta_j (\Delta_i ( x)))-e^{\lambda f} (\Delta_i ( x)))^2
 \\    \nonumber \leq &\lambda^{2} D_2 e^{2\lambda f(x) } \nonumber ,
 \end{align}
 for $D_2=2^{N-1} M  (m+\max_{i,j}\{w_{ij}\} )^2 e^{2  N( m+2\max_{i,j}\{w_{ij}\})} $, 
 where above we used that  $\vert f_i (\Delta_j (\Delta_i(x)))-f _i(\Delta_i(x))\vert\leq m+\max_{i,j}\{w_{ij}\}$ and $\max\{ f_k (\Delta_j (\Delta_i(x)))\}\leq  f_i (x)+m+2\max_{i,j}\{w_{ij}\} $. If we work as in the first two terms, the third term on the right hand side of (\ref{Pr1_0})  can be bounded by
  \begin{align}\label{Pr1_3}\Gamma (f,f)(\Delta_j(\Delta_i(x)))\leq  \lambda^{2} D_3 e^{2\lambda f(x) } ,
 \end{align}
 where now $D_3= 2^{N-1} M  (m+2\max_{i,j}\{w_{ij}\} )^2 e^{2  N( m+3\max_{i,j}\{w_{ij}\})}$. Combining together the bounds (\ref{Pr1_1})-(\ref{Pr1_3}) to bound the right hand side of (\ref{Pr1_0}) leads to 
\[  P_t( e^{\lambda f(x)}\log \frac{e^{\lambda f(x)}}{P_t e^{\lambda f(x)}}) \leq \lambda^{2}   \delta(t)\left( D_1+ND_2+N^{2}D_3 \right)P_{t}\left(e^{\lambda f(x)}\right) .
\]
 The rest of the  proof of Proposition \ref{Pcon}   follows on the same lines of the proof of Theorem \ref{empthm},  for  $\Psi(\lambda)=P_{t}\left(e^{\lambda f(x)}\right) $.
 \qed


\begin{thebibliography}{99}
   
 



 \bibitem{An} M. Andr\'e \textit{A   result   of   metastability   for   an   infinite   system   of   spiking   neurons}. J Stat Phys, 177, 984-1008 (2019)


 
 
\bibitem{A-L} C. Ane and M. Ledoux, \textit{Rate of convergence for ergodic continuous Markov processes: Lyapunov versus Poincar\'e}. Probab. Theory Relat. Fields 116, 573-602 (2000).
 
\bibitem{ABGKZ} R. Aza\"is, J.B. Bardet, A. Genadot, N. Krell and P.A. Zitt,
\textit{Piecewise deterministic Markov process (pdmps). Recent results}. Proceedings 44, 276-290 (2014). 



 \bibitem{Ba1} D. Bakry, \textit{L'hypercontructivit\'e et son utilisation en th\'eorie des semigroupes}. Ecole d'Et\'e de Probabilit\'es de St-Flour. Lecture Notes in Math., 1581, 1-114, Springer  (1994).  
 
  \bibitem{Ba2} D. Bakry, \textit{On  Sobolev and logarithmic Sobolev inequalities for Markov semigroups}. New trends in Stochastic Analysis, 43-75, World Scientific  (1997).  
 
 



 \bibitem{Bo-Le} S. Bobkov and M. Ledoux, \textit{Poincar\'e's inequalities and Talagrand's concentration phenomenon for the exponential distribution.} Probab Theory Relat Fields 107, 383-400 (1997).


  \bibitem{B-G-V} F. Bolley, A. Guillin and C. Villani, \textit{Quantitative Concentration Inequalities for Empirical Measures on Non-compact Spaces}.  C. Probab. Theory Relat. Fields 137, 541–593  (2007).
 
\bibitem{Chaf} D. Chafai,
 \textit{Entropies, convexity, and functional inequalities}. J. Math. Kyoto Univ. 44 (2), 325 - 363 (2004). 
 

 \bibitem{C17} J. Chevalier,
 \textit{Mean-field limit of  generalized Hawkes processes}.  Stochastic Processes and their Applications 127 (12), 3870 - 3912 (2017).

\bibitem{C-D-M-R}  A. Crudu, A. Debussche, A. Muller and O. Radulescu
\textit{Convergence of stochastic gene networks to hybrid piecewise deterministic processes}. The Annals of Applied Probability 22, 1822-1859, (2012).

\bibitem{Davis84} M.H.A. Davis
\textit{Piecewise-derministic Markov processes: a general class off nondiffusion stochastic models} J. Roy. Statist. Soc. Ser. B, 46(3) 353 - 388 (1984).

\bibitem{Davis93} M.H.A. Davis
\textit{Markov models and optimization} Monographs on Statistics and Applied Probability, vol. 49 Chapman $\&$ Hall,  London. (1993)

 
  



\bibitem{D-SC}  P. Diaconis and L. Saloff-Coste
\textit{Logarithmic   Sobolev inequalities for finite Markov Chanis}. The Annals of Applied Probability 6, 695-750, (1996).


 \bibitem{D-L-O} A. Duarte, E. L\"ocherbach and G. Ost, \textit{Stability, convergence to equilibrium and simulation of non-linear Hawkes Processes with memory kernels given by the sum of Erlang kernels }. ESAIM: PS 23,  770�796 (2019). 



\bibitem{D-O} A. Duarte and G.Ost,
\textit{A model for neural activity in the absence of external stimuli}  Markov Processes and Related Fields 22, 37-52  (2016). 
 
\bibitem{G-L} A. Galves and E. L\"ocherbach, \textit{Infinite Systems of Interacting Chains with Memory of Variable Length-A Stochastic Model for Biological Neural Nets}. J Stat Phys 151, 896-921 (2013). 
 


  \bibitem{G-Z} A.Guionnet  and B.Zegarlinski,
\textit{ Lectures on Logarithmic Sobolev Inequalities}, IHP Course 98,   1-134 in Seminare de Probabilite XXVI, Lecture Notes in
         Mathematics 1801, Springer (2003).
 
 \bibitem{H-R-R} N. Hansen, P. Reynaud-Bouret and V. Rivoirard
\textit{Lasso and probabilistic inequalities for multivariate point processes}. Bernoulli, 21(1) 83-143 (2015). 
 
\bibitem{H-K-L} P. Hodara, N. Krell and E. L\"ocherbach, \textit{Non-parametric estimation of the spiking rate in systems of interacting neurons}. E. Stat Inference Stoch Process,  1-16 (2016). 

\bibitem{H-L} P. Hodara and E. L\"ocherbach,
\textit{Hawkes processes with variable length memory and an infinite number of components}. Adv. Appl. Probab 49, 84-107 (2017).  

\bibitem{H-P} P. Hodara and I. Papageorgiou,
\textit{Poincar\'e type inequalities for compact  degenerate pure jump Markov processes}. Mathematics 7(6), 518 (2019).  

  
 \bibitem{Led} 
 M. Ledoux, \textit{The concentration of measure phenomenon.}  Mathematical Surveys and monographs 89, AMS  (2001). 
 
\bibitem{Led0} 
 M. Ledoux, \textit{Concentration of measure and logarithmic Sobolev inequalities.}  Seminaire de Probabilites XXXV. Lecture notes in Math. 1709, 120-216, Springer    (1999). 

 
\bibitem{L17} E. L\"ocherbach,
\textit{Absolute continuity of the invariant measure in piecewise deterministic Markov Processes having degenerate jumps}. Stoch. Process. Their Appl.  128, 1797-1829 (2018).



\bibitem{Ma} F. Malrieu,  \textit{Logarithmic Sobolev inequalities for some nonlinear PDE’s}. Stochastic Process. Appl. 95,  109-132 (2001).
 
\bibitem{No} J. R. Norris,  \textit{Markov Chains}. Vol 2  Cambridge Series in Statistical and Probabilistic Mathematics, Cambridge University Press, (1998). 
 
\bibitem{PTW-10} K. Pakdaman, M. Thieulen and G. Wainrib,
\textit{Fluid limit theorems for stochastic hybrid systems with application to neuron models}. Adv.\ Appl.\ Probab 42, 761-794, (2010). 



 \bibitem{SC}  L. Saloff-Coste,
\textit{ Lectures on finite Markov chains.} IHP Course 98,  Ecole d' Ete  de Probabilites de Saint-Flour XXVI, Lecture Notes in
         Math. 1665, 301-413, Springer (1996).




\bibitem{Ta1}  M. Talagrand,
\textit{ Concentration of measure and isoperimetric inequalities in product spaces.} Publ. Math. I.H.E.S. 81, 73-205, Springer (1995).
         
     \bibitem{Ta2}  M. Talagrand,
\textit{A new isoperimetric inequality and concentration of measure phenomenon.} In: Lindenstrauss, J., Milman, V.D. (eds.) Geometric Aspects of Functional Analysis, Lecture notes in Math. 1469, 94-124, Springer-Verlag, Berlin (1991). 



 \bibitem{W-Y} F-Y. Wang and C. Yuan, \textit{Poincar\'e inequality on the path space of Poisson point processes}. J Theor Probab 23 (3), 824-833 (2010).         

 
 \bibitem{Yo} K. Yosida, \textit{Functional Analysis}. Springer-Verlang (1980).     

 
 
\end{thebibliography}
\end{document}